\DeclareMathAlphabet{\mathcal}{OMS}{cmsy}{m}{n}
\DeclareMathAlphabet{\mathbbold}{U}{bbold}{m}{n}  
\theoremstyle{plain}
\newtheorem{thm}{Theorem}[section]
\newtheorem{lm}[thm]{Lemma}
\newtheorem{cor}[thm]{Corollary}
\newtheorem{prop}[thm]{Proposition}
\theoremstyle{remark}
\newtheorem{rmk}{Remark}
\theoremstyle{definition}
\newcommand{\bnu}{\begin{enumerate}}
\newcommand{\enu}{\end{enumerate}}
\newcommand{\bpf}{\begin{proof}}
\newcommand{\epf}{\end{proof}}
\newcommand{\qq}{\qquad}
\newcommand{\al}{\alpha}
\newcommand{\be}{\beta}
\newcommand{\ga}{\gamma}
\newcommand{\om}{\omega}
\newcommand{\la}{\lambda}
\newcommand{\ep}{\epsilon}
\newcommand{\si}{\sigma}
\newcommand{\tht}{\theta}
\newcommand{\vp}{\varphi}
\newcommand{\de}{\delta}
\newcommand{\bbr}{\mathbb{R}}
\newcommand{\rn}{\mathbb{R}^n}
\newcommand{\f}{\frac}
\newcommand{\p}{\partial}
\newcommand{\nf}{\infty}
\newcommand{\tf}{\tfrac}
\newcommand{\wh}{\widehat}
\newcommand{\wtd}{\widetilde}
\newcounter{question}
\newcommand{\qt}{%
        \stepcounter{question}%
        \thequestion}
\newcommand{\bq}{\fbox{Q\qt}\ }
\newcounter{project}
\newcommand{\mm}{\mathcal M}
\newcommand{\vu}{{\vec\mu}}
\begin{document}

\author[L. Grafakos]{Loukas Grafakos}
\address{Department of Mathematics, University of Missouri, Columbia MO 65211, USA}
\email{grafakosl@missouri.edu}

\author[D. He]{Danqing He}
\address{Department of Mathematics, Sun Yat-sen  University, Guangzhou, 510275, P. R. China}
\email{hedanqing@mail.sysu.edu.cn}

\author[P. Honz\'ik]{Petr Honz\'ik}
\address{MFF UK,
Sokolovska 83,
Praha 7,
Czech Republic}
\email{honzik@gmail.com}

\thanks{The first author would like to acknowledge the support  of the Simons Foundation
and of the University of Missouri Research Board and Research Council. 
The second author was supported by NNSF of China (No. 11701583),  the Guangdong Natural Science Foundation
(No. 2017A030310054), and the
Fundamental Research Funds for the Central Universities (No. 17lgpy11).
The third author  
 was supported by GA\v CR P201/18-07996S}
\thanks{2010 Mathematics Classification Number 42B15, 42B20, 42B25}

\title[Bilinear maximal operators]{Maximal operators associated with bilinear multipliers of limited decay}
\date{}
\maketitle
\begin{abstract}
Results analogous to those 
proved by Rubio de Francia \cite{RubiodeFrancia1986} are obtained 
for  a class of   maximal functions  
formed by dilations of bilinear multiplier operators of limited decay. We focus our attention 
to $L^2\times L^2\to L^1$ estimates. We discuss two applications:  the 
 boundedness of the bilinear maximal Bochner-Riesz operator  
and   of the bilinear spherical maximal operator. For the latter we improve the known results in 
\cite{Barrionuevo2017} by reducing the dimension restriction from $n\ge 8$ to $n\ge 4$.
\end{abstract}


\section{Introduction}

Coifman and Meyer \cite{Coifman1975, Coifman1978, Coifman1978b} initiated the study of bilinear 
singular integrals and set the cornerstone of a theory that has recently flourished   in view of the breakthrough results in \cite{Lacey1997, Lacey1999} and of the 
foundational work in \cite{Grafakos2002,  Kenig1999}.
The study of multipliers of limited decay in the bilinear setting, such as of 
Mihlin-H\"ormander type, was initiated in \cite{Tomita2010} and pursued further in  
  \cite{FT, GMT, GS, MT} and other works. Many of these results
have found weighted extensions in terms of the natural multilinear   weights introduced in \cite{Lerner2009}. 
Meanwhile,  the simple characterization of multipliers bounded on $L^2$ 
does  not have a bilinear analogue; see  \cite{Benyi2003} and \cite{Grafakos2018}.

In this work we investigate the $L^2\times L^2\to L^1$ 
boundedness of maximal operators related to bilinear multipliers with limited decay.  
This line of investigation was 
motivated   by the study of the 
bilinear spherical maximal operator  introduced in   \cite{Geba2012} and  further studied in \cite{Barrionuevo2017}; 
another bilinear version of the  spherical maximal operator is studied in \cite{GIKL}.

The spherical maximal operator was   shown to be $L^p$ bounded  by 
Stein \cite{Stein1976} in dimensions $n\ge 3$ (see also \cite[Chapter XI]{Stein1993}) 
but its planar version ($n=2$) 
 was completed by Bourgain \cite{Bourgain1986}.  
Rubio de Francia  \cite{RubiodeFrancia1986} introduced a
different approach to study this operator in dimensions $n\ge 3$ 
and proved the following theorem concerning 
  general maximal functions  that include 
the spherical maximal operator.
 
\smallskip

\noindent{\bf Theorem (\cite[Theorem B]{RubiodeFrancia1986}).}
{\it 
Let $s$ be an integer with $s>n/2$, let   $a>1/2$, and suppose that
$m $  is a function of class $\mathcal C^{s+1}(\mathbb R^n)$ that satisfies 
$$
|D^\al m(\xi)|\le C|\xi|^{-a}\qq \text{for all $|\al|\le s+1$.}
$$
 Then, $T_m^*(f)=\sup_{t>0}|(m(t\cdot)\wh f)^\vee |$ is bounded on $L^p(\mathbb R^n)$ for
$$
q_a=\f{2n}{n+2a-1}<p<\f{2n-2}{n-2a}=r_a
$$
(with the understanding that $q_a=1$ if $a > (n + 1)/2$ and $r_a=\nf$ if $a \ge n/2$).

}
Here $\wh f $ is the Fourier transform of 
$f$ given by $\wh f(\xi) = \int_{\rn} f(x) e^{-2\pi i x\cdot \xi} dx.$

\medskip

In this paper we are concerned with maximal operators formed by 
dilations of  bilinear multiplier operators  of the form 
$$
T_m(f,g)(x)=\sup_{t>0}\Big|\int_{\rn}\int_{\rn}m(t\xi,t\eta)\wh f(\xi)\wh g(\eta)e^{2\pi ix\cdot (\xi+\eta)}d\xi d\eta\Big|
$$
for all Schwartz functions $f$ and $g$ on $\rn$.  Our main result is
the  following theorem, which presents a bilinear 
analogue of the aforementioned  result of Rubio de Francia. 

\begin{thm}\label{04071}
Let  $a>\tf n2+1$.
Suppose that $m(\xi,\eta)\in C^{\nf}(\bbr^{2n})$ satisfies  
$$
|\p^\be m|\le C_\be |(\xi,\eta)|^{-a}
$$
 for all $|\be|\le  [\tf n2]+2$, where $[\tf n2]$ is the integer part of $\tf n2$.
Define
$$
S_t(f,g)(x)=\int_{\bbr^{2n}}m(t\xi,t\eta)\wh f(\xi)\wh g(\eta)e^{2\pi ix\cdot(\xi+\eta)}
d\xi d\eta.
$$
Then
the bilinear maximal operator defined by
\begin{equation}\label{calM}
\mathcal M(f,g)=\sup_{t>0}|S_t(f,g)|
\end{equation}
is bounded from $L^2(\bbr^n)\times L^2(\rn)$
to $L^1(\rn)$.

\end{thm}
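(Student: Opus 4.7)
The plan is to adapt Rubio de Francia's linear argument to the bilinear setting, replacing $L^p$ estimates by $L^2\times L^2\to L^1$ bilinear estimates throughout. Fix a smooth radial $\Psi$ on $\bbr^{2n}$ supported in $\{1/2\le|u|\le 2\}$ with $\sum_j\Psi(2^{-j}u)=1$ for $u\ne 0$, and write $m=m_{\mathrm{low}}+\sum_{j\ge 1}m_j$ where $m_j(\xi,\eta)=m(\xi,\eta)\Psi(2^{-j}(\xi,\eta))$. Since $\mc M(f,g)\le\sum_j\mc M_{m_j}(f,g)$ and the rescaling $t\mapsto 2^jt$ is harmless in the supremum, we may replace $m_j$ by $\tilde m_j(\xi,\eta):=m_j(2^j\xi,2^j\eta)$, supported in a fixed annulus and satisfying $\|\p^\be\tilde m_j\|_\infty\ls 2^{j(|\be|-a)}$ for $|\be|\le[\tf n2]+2$. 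The low-frequency piece $m_{\mathrm{low}}$ is a smooth, compactly supported bilinear multiplier whose maximal operator is pointwise dominated by a product of Hardy--Littlewood maximal functions and is easily bounded; the remaining task is to prove $\|\mc M_{\tilde m_j}(f,g)\|_{L^1}\ls 2^{-\ep j}\|f\|_{L^2}\|g\|_{L^2}$ for some $\ep>0$.

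For Schwartz $f,g$ the function $F(t):=S_t^{\tilde m_j}(f,g)(x)$ vanishes as $t\to 0$ and as $t\to\infty$ (the first because $\hat f\otimes\hat g$ is rapidly decreasing and the support of $\tilde m_j(t\cdot,t\cdot)$ in $(\xi,\eta)$ escapes to infinity, the second because that support collapses to the origin with volume $\sim t^{-2n}$), so
$$\sup_t|F(t)|^2\le 2\int_0^\infty|F(t)|\,|tF'(t)|\,\tf{dt}{t}.$$
A short computation gives $tF'(t)=S_t^{\tilde m_j^\sharp}(f,g)(x)$ with $\tilde m_j^\sharp(\xi,\eta)=(\xi,\eta)\cdot\nabla\tilde m_j(\xi,\eta)$ sharing the annular support of $\tilde m_j$ and obeying $\|\p^\be\tilde m_j^\sharp\|_\infty\ls 2^{j(|\be|+1-a)}$. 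Applying Cauchy--Schwarz in $dt/t$ and then in $x$ reduces the theorem to the bilinear $L^1$ square-function estimates
$$\Big\|\Big(\int_0^\infty|S_t^{\tilde m_j}(f,g)|^2\,\tf{dt}{t}\Big)^{1/2}\Big\|_{L^1(\bbr^n)}\ls 2^{j(\tf n2+\tf 12-a)}\|f\|_{L^2}\|g\|_{L^2}$$
and its analogue with $\tilde m_j^\sharp$ (which costs one further factor of $2^j$ from the extra derivative). Multiplying and taking square roots, $\|\mc M_{\tilde m_j}(f,g)\|_{L^1}\ls 2^{j(\tf n2+1-a)}\|f\|_2\|g\|_2$, a summable geometric series in $j$ exactly under the hypothesis $a>\tf n2+1$.

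To establish the square-function bound, decompose $\tilde m_j=\tilde m_j^{I}+\tilde m_j^{II}$ into pieces supported where $|\xi|\gs|\eta|$ and $|\eta|\gs|\xi|$, respectively; by symmetry it suffices to handle $\tilde m_j^{I}$, on whose support $|\xi|\sim 1$. The Fourier support of $S_t^{\tilde m_j^{I}}(f,g)$ forces $f$ to have frequency $\sim 1/t$ and $g$ to have frequency $\ls 1/t$, so, writing $\Delta_k$ and $\Pi_{\le k}$ for the standard Littlewood--Paley projections,
$$S_t^{\tilde m_j^{I}}(f,g)=S_t^{\tilde m_j^{I}}(\Delta_k f,\Pi_{\le k+C}g)\q\text{for }t\in[2^{-k},2^{-k+1}].$$
Splitting $\int_0^\infty=\sum_k\int_{2^{-k}}^{2^{-k+1}}$ and integrating by parts in the bilinear kernel with the $[\tf n2]+2$ available derivatives spent asymmetrically between the two spatial variables yields the pointwise control
$$\int_{2^{-k}}^{2^{-k+1}}|S_t^{\tilde m_j^{I}}(\Delta_k f,\Pi_{\le k+C}g)(x)|^2\,\tf{dt}{t}\ls 2^{2j(\tf n2+\tf 12-a)}M(\Delta_k f)(x)^2\,M(\Pi_{\le k+C}g)(x)^2,$$
with $M$ the Hardy--Littlewood maximal operator. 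Summing in $k$, using $\sup_k M(\Pi_{\le k+C}g)\ls M(Mg)$, and applying the Fefferman--Stein vector-valued maximal inequality together with the $L^2$-boundedness of $M$ and of the Littlewood--Paley square function closes the estimate.

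The main obstacle is this bilinear kernel estimate. Since the kernel lives on $\bbr^{2n}$ but only $[\tf n2]+2$ derivatives of $\tilde m_j^{I}$ are controlled, one cannot make it $L^1$-integrable in both spatial variables simultaneously (that would require more than $2n$ derivatives). The Littlewood--Paley decomposition saves the day by imposing a frequency asymmetry: because $g$ sits at lower dyadic frequency than $f$, its contribution can be absorbed into a single Hardy--Littlewood maximal function, freeing the full derivative budget to be spent integrating by parts in the variable paired with $f$. This asymmetry is precisely the reason the threshold condition on the decay is $a>\tf n2+1$ and not the symmetric $a>2n+1$ that a naive kernel estimate would demand.
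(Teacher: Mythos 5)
Your overall architecture (dyadic decomposition of $m$, easy low-frequency piece, the FTC identity $\sup_t|F|^2\le 2\int_0^\infty|F|\,|tF'|\,\tfrac{dt}{t}$, Cauchy--Schwarz reducing the maximal bound to two bilinear square-function bounds) is sound and matches the spirit of both Rubio de Francia's argument and parts of the paper. The gap is the step you call "the main obstacle," namely the claimed pointwise estimate
\begin{equation*}
\int_{2^{-k}}^{2^{-k+1}}\big|S_t^{\tilde m_j^{I}}(\Delta_k f,\Pi_{\le k+C}g)(x)\big|^2\,\frac{dt}{t}\;\lesssim\;2^{2j(\frac n2+\frac12-a)}\,M(\Delta_k f)(x)^2\,M(\Pi_{\le k+C}g)(x)^2 ,
\end{equation*}
which you assert follows from integrating by parts "asymmetrically." It does not. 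Spending the entire budget of $[\tfrac n2]+2$ derivatives in the $\xi$-variable gives kernel decay of order $(1+|y|/t)^{-([n/2]+2)}$ in an $n$-dimensional variable, which is not integrable once $n\ge 3$ (since then $[\tfrac n2]+2\le n$), so no domination by $M(\Delta_k f)$ results; and since the $t$-range has $dt/t$-measure $\sim 1$, the left side is essentially a pointwise bound for a single $t$, for which averaging in $t$ cannot help. More fundamentally, the factor $2^{jn/2}$ (rather than $2^{jn}$) in your constant is an $L^2$-orthogonality (Plancherel) gain: a pointwise bound by a product of Hardy--Littlewood maximal functions forces an $L^1$ bound on the kernel, and for multipliers of size $2^{-ja}$ that are rough at scale $2^{-j}$ on a unit annulus in $\mathbb R^{2n}$ the best such constant is of order $2^{j(n-a)}$ at least, not $2^{j(\frac n2+\frac12-a)}$. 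So the estimate on which your whole chain rests is both unjustified and, as a pointwise statement, false in the claimed strength; this is exactly the heart of the problem, not a technical afterthought.

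For comparison, the paper never proves a bilinear square-function bound for the full annular piece (the authors explicitly only know how to control $\int_0^\infty|\widetilde B_t|\tfrac{dt}{t}$ in $L^1$, not the corresponding $g$-function; had your square-function estimate with the natural constant been available, the threshold would drop to $a>\tfrac n2+\tfrac12$). Instead, each annular multiplier is expanded in Daubechies wavelets and split into a diagonal part and two off-diagonal parts. The diagonal part is handled by the FTC identity plus an $L^2\times L^2\to L^1$ bound (Proposition~\ref{BL}) valid with only about $n/2$ Sobolev derivatives, proved by a column-counting/orthogonality argument with the wavelet coefficients -- precisely the substitute for the kernel estimates that are unavailable at this smoothness. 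The square-function ($g$-function) trick is used only for the off-diagonal parts, where for each dilation level only finitely many wavelets occur in one variable, so the operator genuinely is pointwise dominated by $M(g)(x)\,T_\sigma(f)(x)$ with $T_\sigma$ bounded on $L^2$; that factorization is what legitimizes there the kind of bound you are asserting for the general (diagonal) piece. To repair your proposal you would need either a proof of the bilinear square-function estimate under an $L^r_s$ hypothesis with $s\sim n/2$ (open, as far as this paper is concerned) or to re-route the diagonal part through a limited-smoothness $L^2\times L^2\to L^1$ multiplier theorem as the paper does.
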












\medskip

In studying linear and bilinear spherical maximal operators, we often decompose the multiplier
 $m=\sum_{j=0}^\nf m_j$ with $m_j=m\psi_j$ for smooth  bumps $\psi_j$ supported in 
 annuli $|(\xi,\eta)|\approx 2^j$, $j\ge 1$ and $\psi_0$ supported in a neighborhood of the origin.
We recall the  Sobolev space $L^r_s$ of all functions $g$ with $\|(I-\Delta)^{s/2}g\|_{L^r}<\nf$, 
where $\Delta$ is the usual Laplaciand and $s>0$. 


Motivated by H\"ormander type conditions, we obtain 
Theorem~\ref{04071}  as a consequence of the following more general result,
which is the main contribution of this paper.

\begin{thm}\label{04141}
Let  $\la>1$, $1< r\le 4$, $s> \tf{2n}r+1, j\ge 1$.
Suppose that for each $j\in \mathbb N$, $M_j(\xi,\eta)$ is a multiplier supported in
$$
\{(\xi,\eta)\in\bbr^{2n}:\ 2^{j-1}\le |(\xi,\eta)|\le 2^{j+1}\},
$$
that satisfies
\begin{equation}\label{e04141}
\|M_j\|_{L^r_{s}(\bbr^{2n})}\le A 2^{-\la j}.
\end{equation}
Let
$$
S_t(f,g)(x)=\int_{\bbr^{2n}}\sum_{j\ge0}M_j(t\xi,t\eta)\wh f(\xi)\wh g(\eta)e^{2\pi ix\cdot(\xi+\eta)}
d\xi d\eta .
$$
Then the   maximal operator  
$$
T(f,g)=\sup_{t>0}|S_t(f,g)|
$$
is bounded from $L^2(\bbr^n)\times L^2(\rn)$
to $L^1(\rn)$ with   bound a constant multiple of $ A$.
\end{thm}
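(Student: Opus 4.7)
The plan is to dyadically decompose the multiplier and prove a per-piece decay estimate. Write $S_t(f,g)=\sum_{j\ge 0} S_t^j(f,g)$ where $S_t^j$ is the bilinear operator with symbol $M_j(t\xi,t\eta)$, so that $T(f,g)\le\sum_{j\ge 0} T_j(f,g)$ with $T_j(f,g)=\sup_{t>0}|S_t^j(f,g)|$. It suffices to establish $\|T_j(f,g)\|_{L^1}\ls A\,2^{-\de j}\|f\|_{L^2}\|g\|_{L^2}$ for some fixed $\de>0$ and then sum; the strict inequality $\la>1$ provides the slack needed to absorb the unavoidable losses arising from the arguments below.

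For each fixed $j$, the main tool is the classical Stein-type identity
\begin{equation*}
\sup_{t>0}|F(t)|^2 \le 2\Big(\int_0^\nf|F(t)|^2\,\f{dt}{t}\Big)^{1/2}\Big(\int_0^\nf|F'(t)|^2\, t\,dt\Big)^{1/2},
\end{equation*}
which follows from $|F(t_0)|^2=-2\,\re\int_{t_0}^\nf F\overline{F'}\,ds$ together with Cauchy--Schwarz. Applied to $F(t)=S_t^j(f,g)(x)$ and combined with the pointwise bound $\int\sqrt{UV}\,dx\le(\int U\,dx)^{1/2}(\int V\,dx)^{1/2}$, this yields $\|T_j(f,g)\|_{L^1}^2\le 2\|G_j\|_{L^1}\|H_j\|_{L^1}$, where $G_j=(\int_0^\nf|S_t^j(f,g)|^2\,dt/t)^{1/2}$ and $H_j=(\int_0^\nf|\p_t S_t^j(f,g)|^2\,t\,dt)^{1/2}$. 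A direct computation gives $\p_t[M_j(t\xi,t\eta)]=t^{-1}\wtd M_j(t\xi,t\eta)$ with $\wtd M_j(\xi,\eta)=(\xi\cdot\nabla_\xi+\eta\cdot\nabla_\eta)M_j$, so $H_j$ has the same structure as $G_j$ but with $M_j$ replaced by $\wtd M_j$. The latter is supported in the same annulus and satisfies $\|\wtd M_j\|_{L^r_{s-1}}\ls 2^j\|M_j\|_{L^r_s}\le A\,2^{(1-\la)j}$; the hypothesis $s>\tf{2n}r+1$ is exactly what ensures $s-1>\tf{2n}r$, so $\wtd M_j$ remains in a Sobolev class eligible for the embedding arguments below.

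It therefore remains to establish the bilinear $g$-function $L^1$-estimate
\begin{equation*}
\Big\|\Big(\int_0^\nf|S_t^K(f,g)|^2\,\f{dt}{t}\Big)^{1/2}\Big\|_{L^1}\ls C_K\,\|f\|_{L^2}\|g\|_{L^2}
\end{equation*}
for $K=M_j$ and $K=\wtd M_j$, with $C_K$ comparable to a suitable Sobolev norm of $K$. For this I would combine the Sobolev embedding $L^r_s(\bbr^{2n})\hookrightarrow L^2_{s_0}(\bbr^{2n})$ with some $s_0>n$ (valid since $s>\tf{2n}r$) with the kernel estimate $\|\ck K\|_{L^1(\bbr^{2n})}\ls\|K\|_{L^2_{s_0}}$, to obtain the uniform-in-$t$ bilinear bound $\|S_t^K(f,g)\|_{L^1}\ls\|\ck K\|_{L^1}\|f\|_{L^2}\|g\|_{L^2}$. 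The $t$-integration is then rendered convergent by exploiting the Fourier-support localization of $\wh{S_t^K(f,g)}(\zeta)$ in $|\zeta|\ls 2^j/t$, in conjunction with a Littlewood--Paley decomposition of $f$ and $g$ in the output frequency; orthogonality across the Littlewood--Paley scales reduces the $dt/t$-integral to a summable geometric series.

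The main obstacle is this last step. In the linear analogue, Plancherel immediately gives the $L^2$-norm of the $g$-function and one interpolates with other exponents; here the natural target is $L^1$, and no analogous identity applies to bilinear square functions. A careful matching of Fourier scales is needed to absorb the $2^j$ loss incurred by $\wtd M_j$ against the $2^{-\la j}$ gain from the hypothesis on $M_j$, and this is exactly where the strict inequality $\la>1$ is used.
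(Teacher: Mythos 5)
Your reduction via the Stein identity to two bilinear $g$-functions is fine as far as it goes, but the argument hinges entirely on the estimate you defer to the last step, namely
$$
\Big\|\Big(\int_0^\infty|S_t^K(f,g)|^2\,\tfrac{dt}{t}\Big)^{1/2}\Big\|_{L^1}\lesssim \|K\|_{L^r_s}\,\|f\|_{L^2}\|g\|_{L^2}
$$
for a general annulus-supported $K$, and this is precisely the point that is \emph{not} known and is not a routine consequence of the ingredients you list. The route you sketch cannot be completed as stated: the uniform-in-$t$ bound $\|S_t^K(f,g)\|_{L^1}\lesssim\|\check K\|_{L^1}\|f\|_{L^2}\|g\|_{L^2}$ gives no decay in $t$ by itself, and the proposed rescue via Littlewood--Paley decomposition of the output frequency fails for two reasons. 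First, on the support of $M_j(t\xi,t\eta)$ the output frequency $\zeta=\xi+\eta$ has no lower bound (the anti-diagonal $\xi\approx-\eta$ is fully present), so there is no lacunary localization in $\zeta$ that could make the $dt/t$ integral geometric. Second, even with frequency localization there is no orthogonality available at the level of $L^1$ norms of a bilinear square function; Plancherel-type identities, which drive the linear argument, have no bilinear $L^2\times L^2\to L^1$ analogue. Note also that if your $g$-function estimate were true with constants $\|M_j\|_{L^r_s}$ and $\|\widetilde M_j\|_{L^r_{s-1}}$, the geometric mean would give per-piece decay $2^{-j(\la-1/2)}$ and the theorem would hold for $\la>1/2$, strictly stronger than what is claimed; this is a sign that you are routing the proof through an estimate harder than the theorem itself (the authors explicitly know how to control only $\|\int_0^\infty T_{m_t}(f,g)\,dt/t\|_{L^1}$, not $\|(\int_0^\infty|T_{m_t}(f,g)|^2\,dt/t)^{1/2}\|_{L^1}$, which is why the hypothesis is $\la>1$).

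The paper circumvents exactly this obstacle by a wavelet decomposition of $M_j$ (Daubechies wavelets at dilation levels $\ga$) and a splitting into a diagonal part, supported away from both coordinate axes, and two off-diagonal parts near the $\xi$- and $\eta$-axes. For the diagonal part no square function is used: one writes $B^1_{j,\ga,t}=\int_0^t\widetilde B^1_{j,\ga,s}\,ds/s$ and bounds $\sup_t$ by the full $L^1(dt/t)$ integral, paying the whole factor $2^j$ from the derivative (hence $\la>1$); the key point is that the wavelet-based $L^2\times L^2\to L^1$ bound for $\widetilde B^1_{j,\ga,1}$ comes with characteristic functions of the annulus $C2^{-\ga}\le|\xi|\le 2^j$ on $\wh f,\wh g$, so after rescaling the $dt/t$ integral costs only a factor $(j+\ga)$ by Cauchy--Schwarz, with the square function appearing only on $\wh f$ and $\wh g$ where Plancherel applies. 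The square-function trick you propose is used in the paper only for the off-diagonal parts, where it is legitimate because at each level $\ga$ the multiplier there involves finitely many bumps in one variable: it factors, one factor is dominated pointwise by the Hardy--Littlewood maximal function of $g$, and the other is a linear $L^2$-bounded operator frequency-localized to $|\xi|\sim 2^j$, which is what makes the $g$-function $L^1$ bound provable in that regime. Without some substitute for this structure on the diagonal part, your outline has a genuine gap at its central step.
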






We prove Theorem~\ref{04141}  in Section~\ref{06091}. Below we   derive 
Theorem~\ref{04071}.


\bpf[Proof of Theorem~\ref{04071} assuming Theorem~\ref{04141}]

We fix a smooth function $\wh\vp$
  supported in $B(0,2)$ whose value is $1$ in the unit ball,
and define 
$$
\wh\psi(\cdot)=\wh\vp(2^{-1}\cdot)-\wh\vp(\cdot) 
$$
and
$$
m_j(\xi,\eta)=m(\xi,\eta)\wh\psi(2^{-j}(\xi,\eta))
$$
 for $j\ge 1$,
and $m_0=m-\sum_{j\ge 1}m_j$.

Then $m_0$ is a compactly supported smooth function, so the corresponding bilinear maximal operator
$$
T_{m_0}^*(f,g)(x)=\sup_{t>0}\Big|\int_{\bbr^{2n}}m_0(t\xi,t\eta)\wh f(\xi)\wh g(\eta)
e^{2\pi ix\cdot(\xi+\eta)}d\xi d\eta \Big|
$$
is bounded by $CM(f)M(g)$, where $M$ is the Hardy-Littlewood maximal function.
So $T_0$ is bounded from $L^{p_1}(\rn)\times L^{p_2}(\rn)$
to $L^p(\rn)$ for all $1<p_1,p_2<\nf$ and $\tf1p=\tf1{p_1}+\tf1{p_2}$.

Let $r=4$, then
$\|m_j\|_{L^4_s}\le C2^{-ja}2^{jn/2}$.
Hence $m_j$ 
satisfies conditions of Theorem~\ref{04141} with the decay $\la=a-\tf n2>1$. Theorem~\ref{04141}
then implies that the bilinear maximal operator \eqref{calM} is bounded from $L^2\times L^2$ to $L^1$.
\epf



As an application of Theorem~\ref{04071}, we improve the known results concerning the boundedness of the bilinear spherical maximal operator. It was shown in 
\cite{Barrionuevo2017} that this 
operator is bounded from $L^2(\rn) \times L^2(\rn)$
to $L^1(\rn)$ for $n\ge 8$. Here we reduce the dimension restriction to $n\ge 4$. 

\begin{thm}\label{12093}
Let $m_\al(\xi,\eta)=\tf{J_{n+\al-1}(2\pi|\xi,\eta|)}{|(\xi,\eta)|^{n+\al-1}}$ for $\al\in\mathbb R$,
then the bilinear maximal operator $M_\al$ defined by 
$$
M_\al(f,g)=\sup_{t>0}\bigg|\iint_{\bbr^{2n}}m_\al(t\xi,t\eta)\wh f(\xi)\wh g(\eta)e^{2\pi ix\cdot(\xi+\eta)}
\, d\xi d\eta\bigg|
$$ 
is bounded from $L^2\times L^2$
to $L^1$ when $n>3-2\al$.

In particular, for $\alpha =0$,   the
 bilinear spherical maximal operator 
\begin{equation}\label{BSMOOO}
\mathcal M_0(f,g)=\sup_{t>0}\bigg|\int_{\mathbb S^{2n-1}} f(x-t\theta) g(x-t\phi) 
d\sigma (\theta,\phi)\bigg|
\end{equation}
  is bounded from $L^2(\bbr)
\times L^2(\bbr)$ to $L^1(\bbr)$ when $n\ge 4$.
\end{thm}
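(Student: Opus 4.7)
The plan is to deduce Theorem~\ref{12093} directly from Theorem~\ref{04071} by verifying that $m_\al$ satisfies the hypotheses with parameter $a = n + \al - \tf12$. Writing $F(r) = J_{n+\al-1}(2\pi r)/r^{n+\al-1}$, one has $m_\al(\xi,\eta) = F(|(\xi,\eta)|)$. The power series
\[
J_\nu(r) = \sum_{k\ge 0} \f{(-1)^k (r/2)^{\nu+2k}}{k!\,\Gamma(\nu+k+1)}
\]
shows that $J_\nu(r)/r^\nu$ is an even entire function of $r$ whenever $\nu > -1$. Under the standing assumption $n > 3-2\al$ one checks that $n+\al-1 > -1$, so $F$ is smooth on $[0,\nf)$ and $m_\al$ is a smooth radial function on $\bbr^{2n}$.

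For the derivative bounds I would iterate the Bessel recurrence $\tf{d}{dr}\big(J_\nu(r)/r^\nu\big) = -J_{\nu+1}(r)/r^\nu$, which shows that each $r$-derivative of $F$ produces a similar expression with $\nu$ replaced by $\nu+1$. Combined with the classical asymptotic $|J_\nu(r)|\ls_\nu r^{-1/2}$ for $r\ge 1$, this gives $|F^{(k)}(r)| \ls r^{-(n+\al-1/2)}$ for $r\ge 1$ and every $k\le [n/2]+2$. On the region $r \le 1$ the derivatives of $F$ are bounded, and since $a = n+\al-\tf12 > 0$, a bounded quantity is automatically $\ls r^{-a}$ there. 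Transferring radial derivatives to Cartesian multi-index derivatives via the chain rule preserves the bound, yielding $|\p^\be m_\al(\xi,\eta)| \ls |(\xi,\eta)|^{-a}$ for every $|\be|\le [n/2]+2$. The hypothesis $a > \tf n2 + 1$ of Theorem~\ref{04071} then reads $n + \al - \tf12 > \tf n2 + 1$, which is exactly the condition $n > 3-2\al$.

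For the second statement I would identify $\mc M_0$ with a constant multiple of $M_0$ via the Fourier transform of surface measure. Expanding $f$ and $g$ through the inverse Fourier transform and interchanging the order of integration yields
\[
\int_{\mathbb S^{2n-1}} f(x-t\tht)g(x-t\vph)\, d\si(\tht,\vph)
 = \iint_{\bbr^{2n}} \wh{d\si}(t\xi,t\eta)\, \wh f(\xi)\wh g(\eta)\, e^{2\pi i x\cdot(\xi+\eta)}\, d\xi\, d\eta,
\]
and the standard identity $\wh{d\si}(\z) = c_n\, J_{n-1}(2\pi|\z|)/|\z|^{n-1}$ for the normalized surface measure on $\mathbb S^{2n-1}\sset \bbr^{2n}$ gives $\mc M_0 = c_n M_0$ pointwise. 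Applying the first assertion with $\al = 0$ then produces boundedness in every dimension with $n > 3$, i.e.\ $n\ge 4$.

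The only nontrivial bookkeeping is the derivative accounting on $m_\al$, and this is routine once one iterates the Bessel recurrence. The substantive content is entirely contained in Theorem~\ref{04071}; its sharp integer-order smoothness exponent $[n/2]+2$ paired with the decay threshold $a > n/2+1$ is precisely what allows one to relax the dimensional restriction from $n\ge 8$ in \cite{Barrionuevo2017} down to $n\ge 4$.
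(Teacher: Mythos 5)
Your proposal is correct and follows exactly the paper's route: verify the hypotheses of Theorem~\ref{04071} for $m_\al$ with $a=n+\al-\tf12$ (the paper states this without the Bessel-recurrence bookkeeping you supply) and identify $\mathcal M_0$ with $M_0$ through $\wh{d\si}=c_n\,J_{n-1}(2\pi|\cdot|)/|\cdot|^{n-1}$, giving $n>3-2\al$ and hence $n\ge4$ for $\al=0$. The extra detail you provide on the derivative estimates and the smoothness of $m_\al$ near the origin is consistent with, and merely fleshes out, the paper's argument.
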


\bpf
The function 
$m_\al$ satisfies the conditions of Theorem~\ref{04071}
with $a=n+\al-\tf12$. Hence when $n>3-2\al$, we  obtain the $L^2\times L^2 \to L^1$ boundedness of $M_\al$. Now recall that the    $(2n-1)$-dimensional surface measure $\sigma$  satisfies  
  $\wh{d\si}=m_0$, i.e.,     $m_\alpha$  with $\al=0$. 
Hence the bilinear spherical maximal operator \eqref{BSMOOO} is bounded from $L^2(\bbr^n)
\times L^2(\bbr^n)$ to $L^1(\bbr^n)$ when $n\ge 4$.
\epf

It was pointed out in \cite{Barrionuevo2017} that $L^2\times L^2 \to L^1$ boundedness 
fails in dimension  $n=1$. As of this writing,   we 
 are uncertain about the behavior of this operator in dimensions $n=2,3$.




We discuss another application of Theorem~\ref{04141}  concerning the 
 bilinear maximal Bochner-Riesz means in Section 4.

\section{Wavelet decomposition}
 We   use  the wavelet decomposition of  multipliers as in \cite{Grafakos2015}. So we need 
 to introduce the tensor type wavelets due to \cite{Daubechies1988}, and the exact form we use here can be found
in \cite{Triebel2006}.


\begin{lm}[{\cite[Section 1.7.3]{Triebel2006}}]\label{wave}
For any fixed $k\in \mathbb N$ there exist real compactly supported functions $\psi_F,\psi_M\in \mathcal C^k(\mathbb R)$, 
which
satisfy that $\|\psi_F\|_{L^2(\mathbb R)}=\|\psi_M\|_{L^2(\mathbb R)}=1$
and $\int_{\mathbb R}x^{\al}\psi_M(x)dx=0$ for $0\le\al\le k$,   
such that,  if $\Psi^G$ is defined by 
$$
\Psi^{G}(\vec x\,)=\psi_{G_1}(x_1)\cdots \psi_{G_{2n}}(x_{2n}) 
$$
for   $G=(G_1,\dots, G_{2n})$ in the set     
$$
 \mathcal I :=\Big\{ (G_1,\dots, G_{2n}):\,\, G_i \in \{F,M\}\Big\}  \, , 
$$
then the  family of 
functions 
$$
\bigcup_{\vec \mu \in  \mathbb Z^{2n}}\bigg[  \Big\{   \Psi^{(F,\dots, F)} (\vec  x-\vec \mu  )  \Big\} \cup \bigcup_{\ga=0}^\nf
\Big\{  2^{\ga n}\Psi^{G} (2^{\ga}\vec x-\vec \mu):\,\, G\in \mathcal I\setminus \{(F,\dots , F)\}  \Big\}  
  \bigg]
$$
forms an orthonormal basis of $L^2(\mathbb R^{2n})$, where $\vec x= (x_1, \dots , x_{2n})$.  
\end{lm}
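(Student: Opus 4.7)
The plan is to reduce the statement to Daubechies' classical one-dimensional construction and then to tensorize. First I would invoke \cite{Daubechies1988}: for each prescribed $k\in\mathbb N$ there exist compactly supported real functions $\psi_F,\psi_M\in C^k(\mathbb R)$ satisfying $\|\psi_F\|_{L^2}=\|\psi_M\|_{L^2}=1$ and $\int_{\mathbb R}x^{\al}\psi_M(x)\,dx=0$ for $0\le\al\le k$, such that
\[
\{\psi_F(\cdot-\mu):\mu\in\mathbb Z\}\cup\{2^{\ga/2}\psi_M(2^\ga\cdot-\mu):\ga\ge 0,\,\mu\in\mathbb Z\}
\]
is an orthonormal basis of $L^2(\mathbb R)$. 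Equivalently, letting $V_\ga$ be the closed linear span of $\{\psi_F(2^\ga\cdot-\mu)\}_\mu$ and $W_\ga=V_{\ga+1}\ominus V_\ga$, one obtains a multiresolution analysis $V_\ga\sset V_{\ga+1}$ with $\overline{\bu_\ga V_\ga}=L^2(\mathbb R)$ and the orthogonal decomposition
\[
L^2(\mathbb R)=V_0\oplus\bigoplus_{\ga\ge 0}W_\ga,
\]
in which $\{\psi_F(\cdot-\mu)\}_\mu$ and $\{2^{\ga/2}\psi_M(2^\ga\cdot-\mu)\}_\mu$ are orthonormal bases of $V_0$ and $W_\ga$ respectively.

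Second I would tensorize. Since $L^2(\mathbb R^{2n})$ is the $2n$-fold Hilbert space tensor product of $L^2(\mathbb R)$, and tensor products of orthonormal bases are again orthonormal bases, all $2n$-fold products of factors from the 1D basis automatically form an orthonormal basis of $L^2(\mathbb R^{2n})$. It remains only to regroup these products in the form given in the statement. Setting $Y_F^\ga=V_\ga$ and $Y_M^\ga=W_\ga$, the distributive identity
\[
V_{\ga+1}^{\otimes 2n}=\bigoplus_{G\in\mathcal I}Y_{G_1}^\ga\otimes\cdots\otimes Y_{G_{2n}}^\ga
\]
combined with a telescoping argument in $\ga$ yields
\[
L^2(\mathbb R^{2n})=V_0^{\otimes 2n}\oplus\bigoplus_{\ga\ge 0}\bigoplus_{G\in\mathcal I\setminus\{(F,\dots,F)\}}Y_{G_1}^\ga\otimes\cdots\otimes Y_{G_{2n}}^\ga.
\]

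Third I would identify each block explicitly. The block $V_0^{\otimes 2n}$ has the orthonormal basis $\{\Psi^{(F,\dots,F)}(\vec x-\vec\mu):\vec\mu\in\mathbb Z^{2n}\}$, and for $G\ne(F,\dots,F)$ the block $Y_{G_1}^\ga\otimes\cdots\otimes Y_{G_{2n}}^\ga$ has the orthonormal basis $\{2^{\ga n}\Psi^G(2^\ga\vec x-\vec\mu):\vec\mu\in\mathbb Z^{2n}\}$, the prefactor $2^{\ga n}=2^{\ga\cdot(2n)/2}$ being the dimensionally correct $L^2(\mathbb R^{2n})$-normalization of a $2^\ga$-dilate. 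Assembling these bases over $\ga\ge 0$ and over $G\in\mathcal I\setminus\{(F,\dots,F)\}$ reproduces precisely the family displayed in the lemma.

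The main obstacle is really only bookkeeping. The substantive content, namely the existence of a compactly supported, $C^k$, orthonormal, scaling/wavelet pair $(\psi_F,\psi_M)$ with the prescribed vanishing moments, is Daubechies' theorem and is quoted as a black box. The only point requiring care is the telescoping of the tensor-product MRA in $2n$ variables and the matching of each scale-$\ga$ tensor block with the explicit dilated-translated functions $2^{\ga n}\Psi^G(2^\ga\vec x-\vec\mu)$; the statement as quoted from \cite[Section 1.7.3]{Triebel2006} records exactly the resulting family, so no further computation is needed.
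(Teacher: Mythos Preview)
Your proposal is a correct and standard outline, but the paper does not prove this lemma at all: it is quoted verbatim from \cite[Section~1.7.3]{Triebel2006}, with the underlying one-dimensional construction attributed to \cite{Daubechies1988}, and is used as a black box. Your sketch---Daubechies' compactly supported $C^k$ scaling/wavelet pair, the associated multiresolution analysis, and the telescoping tensor-product decomposition of $L^2(\mathbb R^{2n})$---is precisely the argument that Triebel records, so there is no paper proof to compare against.
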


For simplicity, we use often below $\om(\xi,\eta)=\om_{k,l}(\xi,\eta)$ to denote the wavelet 
$2^{\ga n}\Psi^{G} (2^{\ga}(\xi,\eta)-(k,l))$ when the dilation factor $\ga$ is fixed.
Moreover we may write $\om_{k,l}(\xi,\eta)=\om_{1,k}(\xi)\om_{2,l}(\eta)$, where 
\begin{equation}\label{defomega1k}
\om_{1,k}(\xi)=2^{\ga n/2}\psi_{G_1}(2^\ga\xi_1-k_1)\cdots \psi_{G_{ n}}(2^\ga\xi_{n}-k_n)
\end{equation}
 and
$\om_{2,l}(\eta)$ is defined in an obvious similar way.
For a good function $m$, we denote by $a_{k,l}$ as the inner product $\langle m,\om_{k,l}\rangle$ of $m$ and $\om_{k,l}$.

Let $F^s_{r,q}(\mathbb R^{2n})$
and $f^s_{r,q} $ be the Triebel-Lizorkin spaces of functions and sequences, respectively; see \cite[Sections 2.2 and 2.3]{Grafakos2014a}. 
To characterize general function spaces, we need the following lemma.

\begin{lm}[{\cite[Theorem 1.64]{Triebel2006}}]\label{08311}
Let $0<r<\nf,\ 0<q\le\nf, \ s\in\mathbb R$, and
for $\ga\in \mathbb N$ and $\vec\mu\in\mathbb N^{2n}$ let $\chi_{\ga\vec\mu}$
be the characteristic function of the cube $Q_{\ga\vec\mu}$ centered at $2^{-\ga}\vec\mu$ with length
$2^{1-\ga}$.  For a sequence $\eta=\{\eta^{\ga,G}_{\vec\mu}\}$ define the norm
$$
\|\eta|f^s_{r,q}\|=
\Big\|(\sum_{\ga,G,\vec\mu}2^{\ga sq}|\eta^{\ga,G}_{\vec\mu}\chi_{\ga\vec\mu}(\cdot)|^q)^{1/q}\Big\|_{L^r(\mathbb R^{2n})}.
$$

Let $\mathbb N\ni k>\max
\{s,\f{4n}{\min(r,q)}+n-s\}$.
Let $\Psi_{\vec\mu}^{\ga,G}$ be the $2n$-dimensional Daubechies wavelet with smoothness $k$
as in Lemma \ref{wave}.
Let $m\in \mathcal S'(\mathbb R^{2n})$. Then $m\in F^s_{r,q}(\mathbb R^{2n})$ if and only if
it can be represented as 
$$
m=\sum_{\ga,G,\vec\mu}\eta^{\ga,G}_{\vec\mu}2^{-\ga n}\Psi^{\ga,G}_{\vec\mu}
$$
with $\|\eta|f_{rq}^s\|<\nf$ with 
unconditional convergence   in $\mathcal S'(\rn)$. Furthermore this representation
is unique,
$$
\eta_{\vec\mu}^{\ga,G}=2^{\ga n}\langle m,\Psi^{\ga,G}_{\vec\mu}\rangle,
$$
and
$$
I: m\to\big\{2^{\ga n}\langle m,\Psi^{\ga,G}_{\vec\mu}\rangle\big\}
$$
is an isomorphism from $F^s_{r,q}(\mathbb R^{2n})$
onto $f^s_{r,q}.$

\end{lm}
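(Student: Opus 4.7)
The plan is to reduce the statement to the standard Frazier-Jawerth $\vp$-transform characterization of $F^s_{r,q}$ and then exploit almost-orthogonality between the Daubechies wavelet system and a Littlewood-Paley decomposition. Throughout I take the benchmark Triebel-Lizorkin norm to be
$$
\|m\|_{F^s_{r,q}} \approx \Big\| \Big( \sum_{j\ge 0} 2^{jsq} |\Delta_j m|^q \Big)^{1/q}\Big\|_{L^r(\bbr^{2n})},
$$
where $\{\Delta_j\}_{j\ge 0}$ is an inhomogeneous Littlewood-Paley family adapted to dyadic annuli of size $2^j$.

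For the forward direction (coefficient control), I would expand $m = \sum_j \Delta_j m$ and bound each pairing $\langle \Delta_j m, \Psi^{\ga,G}_{\vec\mu}\rangle$ by an almost-diagonal kernel in $(\ga, j)$. The compact support and $\mc C^k$ smoothness of the Daubechies building blocks, combined with the $k$ vanishing moments of $\psi_M$, yield off-diagonal decay of the form $2^{-|\ga - j|(k+1)}$ modulated by a spatial factor localized to the cube $Q_{\ga\vec\mu}$ and dominated by the Hardy-Littlewood maximal function of $\Delta_j m$. Summing in $j$ and passing through the Fefferman-Stein vector-valued maximal inequality on $L^r(\ell^q)$ produces $\|\eta \,|\, f^s_{r,q}\| \ls \|m\|_{F^s_{r,q}}$.

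For the reverse direction (synthesis), given $\{\eta^{\ga,G}_{\vec\mu}\}$ with finite $f^s_{r,q}$ norm, define $m$ by the proposed sum and first verify convergence in $\mc S'(\bbr^{2n})$ using the cancellation and spatial localization of the $\Psi^{\ga,G}_{\vec\mu}$. Then apply $\Delta_j$ to $m$ and exploit the same off-diagonal estimate, now in the opposite direction, to dominate $|\Delta_j(2^{-\ga n}\Psi^{\ga,G}_{\vec\mu})|$ pointwise. Assembling the resulting square function and commuting the sum in $(\ga, G, \vec\mu)$ past the $L^r(\ell^q)$ norm gives $\|m\|_{F^s_{r,q}} \ls \|\eta \,|\, f^s_{r,q}\|$. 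The identity $\eta^{\ga,G}_{\vec\mu} = 2^{\ga n}\langle m, \Psi^{\ga,G}_{\vec\mu}\rangle$ and uniqueness of the expansion follow from the $L^2$-orthonormality of the wavelet basis together with a duality argument extending the pairing from $\mc S$ to $\mc S'$.

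The main obstacle is to prove the almost-diagonal estimates with the precise smoothness threshold $k > \max\{s, \tf{4n}{\min(r,q)} + n - s\}$. The requirement $k > s$ is needed so that the wavelets are regular enough to serve as synthesis molecules at the $F^s_{r,q}$ scale, while the condition $k > \tf{4n}{\min(r,q)} + n - s$ ensures that the geometric decay in the off-diagonal kernel compensates the loss incurred when invoking the Peetre-type maximal inequality in $L^r(\ell^q)$ with $r, q$ possibly less than $1$. Tracking these exponents and separately handling the boundary index $G = (F, \dots, F)$, which only appears at scale $\ga = 0$ and plays the role of the scaling-function low-frequency piece, is the technical heart of the argument; this is exactly the content of Theorem 1.64 in \cite{Triebel2006}, whose statement I would invoke directly.
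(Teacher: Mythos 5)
Your proposal is consistent with what the paper actually does: the paper gives no proof of this lemma, quoting it verbatim as Theorem 1.64 of \cite{Triebel2006}, and your argument likewise ends by invoking that theorem directly. The Frazier--Jawerth/almost-diagonal sketch you outline (Littlewood--Paley pairing, off-diagonal decay from smoothness and vanishing moments, Peetre/Fefferman--Stein maximal estimates) is the standard route by which the cited result is established, so it is an acceptable account and there is no competing argument in the paper to compare it with.
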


We now return the multipliers  considering their wavelet decompositions.
Before doing so,  we make some comments. The functions 
$\psi_F$ and $\psi_M$ have compact supports, and all elements in
a fixed level, i.e., of the same dilation factor $\ga$,
in the basis come from translations of finitely many products, so their supports have finite  overlaps.
Consequently we can  
classify the elements in the basis into finitely many classes so that
all elements in the same level in each class have distant supports, which means
that if $\om$ and $\om'$ are in the same class with the same dilation parameter $\ga$,
then $5\text{ supp }\om\cap 5\text{ supp }\om'=\emptyset$,
where $5\text{ supp }\om=B(c_0,5d)$ with $c_0$ inside the support of $\om$ and $d$ the diameter of the support of $\om$.
So, from now on, we will assume that the supports of $\om$'s 
related to a given dilation
factor $\ga$ are far disjoint.

For the multiplier $M_j$ in Theorem~\ref{04141}, we have a wavelet decomposition
using Lemma \ref{08311}, i.e.
\begin{equation}\label{emj}
M_j=\sum a_{\om}\om,
\end{equation}
where the summation is over all $\om=\Psi^{\ga,G}_{\vec \mu}$ in the orthonormal basis 
described in Lemma \ref{wave}, the order of cancellations of $\psi_M$ is $M=4n+6$, and
$a_\om=\langle M_j,\om\rangle$.

Concerning the size of $a_\om=a_{k,l}$, we have the following estimate.


\begin{cor}\label{06061}
The coefficient $a_\om$ in \eqref{emj} related to $\om$ with dilation $\ga$ is bounded by 
$C2^{-j\la}2^{-(s+n-\tf{2n}{r})\ga}$.

\end{cor}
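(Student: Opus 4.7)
The plan is to extract the estimate directly from the wavelet isomorphism in Lemma~\ref{08311}, applied via the standard Triebel--Lizorkin realization $L^r_s=F^s_{r,2}$ of the Bessel potential scale, which is valid for $1<r<\nf$.

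First, I would verify that Lemma~\ref{08311} applies to $M_j$ with the parameters $(r,s,q=2)$ and with wavelet smoothness $k=M=4n+6$. The two conditions $k>s$ and $k>\tf{4n}{\min(r,2)}+n-s$ follow from the standing hypotheses $1<r\le 4$ and $s>\tf{2n}{r}+1$: the latter forces $\tf{4n}{r}-s<\tf{2n}{r}-1<2n<M-n$, while $k>s$ holds for the admissible range of $s$. Hence the map $I:m\mapsto\{2^{\ga n}\langle m,\Psi_{\vec\mu}^{\ga,G}\rangle\}$ is an isomorphism from $F^s_{r,2}(\bbr^{2n})$ onto $f^s_{r,2}$, and combined with the hypothesis \eqref{e04141} this yields
$$
\big\|\{2^{\ga n}a_\om\}\big\|_{f^s_{r,2}}\le C\,\|M_j\|_{L^r_s(\bbr^{2n})}\le C\,A\,2^{-\la j}.
$$

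Next, I would bound this sequence norm from below using a single term. For a fixed $\om=\Psi^{\ga,G}_{\vec\mu}$ at dilation level $\ga$, discarding every other entry in the $\ell^2$-sum defining the $f^s_{r,2}$ norm produces
$$
\big\|2^{\ga s}\,2^{\ga n}|a_\om|\,\chi_{Q_{\ga\vec\mu}}\big\|_{L^r(\bbr^{2n})}\le\big\|\{2^{\ga n}a_\om\}\big\|_{f^s_{r,2}}.
$$
Since $Q_{\ga\vec\mu}\subset\bbr^{2n}$ has side length $2^{1-\ga}$, its $L^r$ normalization contributes a factor comparable to $2^{-2n\ga/r}$; combining with the previous display and rearranging gives
$$
|a_\om|\le C\,A\,2^{-\la j}\,2^{-\ga(s+n-\tf{2n}{r})},
$$
which is the claimed estimate.

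There is essentially no analytic obstacle here: the argument is a one-shot application of the wavelet characterization of $F^s_{r,2}$, and the only bookkeeping point is confirming that the ambient smoothness $M=4n+6$ of the basis fixed in Section~\ref{06091} (and used already in \eqref{emj}) is adequate for the range of $(r,s)$ arising in Theorem~\ref{04141}, which was verified above.
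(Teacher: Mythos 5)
Your argument is correct and is essentially the paper's own proof: identify $L^r_s(\bbr^{2n})$ with $F^s_{r,2}(\bbr^{2n})$, apply the wavelet isomorphism of Lemma~\ref{08311} to bound the sequence norm by $\|M_j\|_{L^r_s}\le A2^{-\la j}$, then keep a single term and use $|Q_{\ga\vec\mu}|^{1/r}\sim 2^{-2n\ga/r}$ to solve for $|a_\om|$. The only addition is your explicit check of the smoothness requirement $k>\max\{s,\tf{4n}{\min(r,2)}+n-s\}$ for $M=4n+6$, which the paper leaves implicit.
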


\bpf

Since $F^s_{r,2}(\mathbb R^{2n})=L^r_s(\bbr^{2n})$,
we have 
$$
\bigg\|\Big(\sum_{k,l}2^{2\ga s}|2^{\ga n}a_{k,l} 
\chi_{Q_{\ga, k,l}}|^2\Big)^{1/2}\bigg\|_{L^r} \leq C\|M_j\|_{L^r_s},
$$
by Lemma~\ref{08311},
where $Q_{\ga, k,l}$ is the cube  centered at $2^{-\ga}(k,l)$ with length
$2^{1-\ga}$. 
Take  just one term on the left hand side,
and notice that $|Q_{\ga, k,l}|\sim 2^{-2n\ga}$,
then 
$$
|a_{k,l}|\le CA 2^{-j\la}2^{-(s+n)\ga}2^{2n\ga/r}\le C A 2^{-j\la}2^{-\ga(s+n-\tf{2n}{r})}.
$$

\epf

With the wavelet decompositions in hand, we are able to prove Theorem~\ref{04141}. The proof
is inspired by \cite{Grafakos2016a} and the square function technique (see  \cite{Carbery1983} and  \cite{RubiodeFrancia1986}). We control 
\begin{equation}\label{e12091}
T_j=\sup_{t>0}\bigg|\iint_{\bbr^{2n}}M_j(t\xi,t\eta)\wh f(\xi)\wh g(\eta)e^{2\pi ix\cdot(\xi+\eta)}
d\xi d\eta\bigg|
\end{equation}
by two integrals with the diagonal  and the off-diagonal parts. For the diagonal part we have just one term, which can be
handled using product wavelets. For the off-diagonal parts
 we introduce two square operators with each one 
 bounded by a product of the Hardy-Littlewood maximal function and a  linear operator bounded on $L^2(\rn)$. 

We need to decompose $M_j$ further. Take $N$ to be a fixed large enough number 
so that $N/10$ is greater than $d$, the diameters of the support of  $\om$ with dilation factor $\ga=0$.
We write $\om(\xi,\eta)=\om_{\vec \mu}(\xi,\eta)=\om_{1,k}(\xi)\om_{2,l}(\eta)$,
where $\vec\mu=(k,l)$ with $k,l\in \mathbb Z^n$, and denote the corresponding 
coefficient $\langle\om_{k,l},M_j\rangle$ by $a_{k,l}$. We 
define
\begin{equation}\label{e02131}
M_j^1=\sum_{\ga\ge 0}M_{j,\ga}^1=\sum_{\ga}\sum_{|k|\ge N} \sum_{|l|\ge N} a_{k,l}\om_{1,k}\om_{2,l}
\end{equation}
\begin{equation}\label{DefMj2}
M_j^2=\sum_{\ga\ge 0}M_{j,\ga}^2=\sum_{\ga}\sum_{k} \sum_{|l|\le N} a_{k,l}\om_{1,k}\om_{2,l}
\end{equation}
\begin{equation}
M_j^3=\sum_{\ga\ge 0}M_{j,\ga}^3=\sum_{\ga}\sum_{|k|\le N} \sum_{|l|\ge N} a_{k,l}\om_{1,k}\om_{2,l}.
\end{equation}
Here $M_j^1$ is the diagonal part such that the support of each level is away from both $\xi$ and $\eta$ axes,
$M_j^2$ is the off-diagonal part with each level's support near the $\xi$ axis, and the support of  each level of
$M_j^3$
 is near the $\eta$ axis. 
 
 

 \begin{rmk}
This decomposition is more delicate than that in \cite{Barrionuevo2017}. 
and allows us to   handle
more singular operators.
Actually for each fix $\ga$, the supports of the wavelets in $M_j^2$ related to $\ga$ are contained in
 $\{(\xi,\eta):\, |\eta|\le Nd2^{-\ga}\}$, while the corresponding part in \cite{Barrionuevo2017} is contained in
$\{(\xi,\eta):\, |\eta|\le 2^{j\ep}\}$.
  \end{rmk}
 
 Corresponding to $M_{j,\ga}^i$, $i=1,2,3$, we  define
$$
B^i_{j,\ga,t}(f,g)(x)=
\iint_{\mathbb R^{2n}}M^i_{j,\ga}(t\xi,t\eta)\wh f(\xi)\wh g(\eta)e^{2\pi i x\cdot (\xi+\eta)}d\xi d\eta.
$$
We can define $B_{j,\ga,t}$ in a similar way and   $B_{j,\ga,t}(f,g)(x)=\sum_{i=1}^3B_{j,\ga,t}^i(f,g)(x)$.
Moreover we can define $T_{j,\ga}^i$ in the way similar to \eqref{e12091} so that
$T_j=\sum_{i=1}^3\sum_\ga T_{j,\ga}^i$.


\section{Proof of Theorem~\ref{04141}}\label{06091}

 For $f,g\in \mathcal S(\rn)$, 
using the fundamental theorem of Calculus,  we   rewrite
 \begin{align*}
 &B^1_{j,\ga,t}(f,g)(x)\\
 =&\iint_{\mathbb R^{2n}}  M_{j,\ga}^1(t\xi,t\eta)\wh f(\xi)\wh g(\eta)
 e^{2\pi ix\cdot (\xi+\eta)}d\xi d\eta\\
 =&\int_0^t\iint_{\mathbb R^{2n}}   (s\xi,s\eta)\cdot\nabla M_{j,\ga}^1(s\xi,s\eta)\wh f(\xi)\wh g(\eta)
 e^{2\pi ix\cdot (\xi+\eta)}d\xi d\eta\f{ds}s , 
 \end{align*}
 where the existence of $\nabla M_{j,\ga}^1$
is guaranteed by that all components in $M_{j,\ga}^1$ are contained in the same level.

 Define the operator  related to $( s\xi, s\eta)\cdot\nabla M_{j,\ga}^1( s\xi, s\eta)$
 as
 $$
\tilde B_{j,\ga,s}^1(f,g)(x)=\iint_{\mathbb R^{2n}} 
 ( s\xi, s\eta)\cdot\nabla M_{j,\ga}^1( s\xi, s\eta)\wh f(\xi)\wh g(\eta)
 e^{2\pi ix\cdot (\xi+\eta)}d\xi d\eta.
 $$ 
 Then we have the pointwise estimate
 \begin{equation}\label{Tj1}
 T^1_{j,\ga}(f,g)(x)=\sup_{t>0} |B^1_{j,\ga,t}(f,g)(x)|\le \int_0^{\nf}|\widetilde B_{j,\ga,t}^1(f,g)(x)|\f{dt}t
 \end{equation}
 
We now turn to the study of the boundedness of  $\widetilde B_{j,\ga,t}^1$.
The basic idea is the observation that when $r\in(1,4)$, \cite[Remark 2]{Grafakos2016a}
shows that whenever $\si$ is supported in $B(0,R)$ we have
$$
\|T_\si(f,g)\|_{L^2\times L^2\to L^1}
\le C \|\si\|_{L^r_s}
$$ with $C$ independent of $R$.

To make this argument rigorous, 
in the  case $t=1$, we have the following estimate, whose proof 
can be found in the Appendix (Section~5).

\begin{prop}\label{BL}
Let $E=\{\xi\in \rn:\,\, C2^{-\ga}\le|\xi|\le 2^j\}$.
Then we have  
\begin{equation}\label{e06061}
\|\widetilde B^1_{j, \ga,1}(f,g)\|_{L^1}\le
C  A C(j,\ga)\|\wh f\chi_{E}\|_{L^2}
\|\wh g\chi_{E}\|_{L^2},
\end{equation}
with 
\begin{eqnarray*}
C(j,\ga) &= &n(j+\ga)2^{-j(\la-1)}2^{\ga(1+\tf n2-s)}   \quad   \textup{when $r=4$} \\
 C(j,\ga) & =& 2^{-j(\la-1)}2^{\ga(1+\tf {2n}r-s)} \qquad\quad\quad   \textup{when $1<r<4$.} 
\end{eqnarray*}
 \end{prop}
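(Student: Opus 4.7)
The plan is to view $\widetilde B^1_{j,\ga,1}$ as a bilinear multiplier operator with symbol $\widetilde M^1_{j,\ga}(\xi,\eta)=(\xi,\eta)\cdot\nabla M^1_{j,\ga}(\xi,\eta)$, apply the $L^2\times L^2\to L^1$ bilinear multiplier bound of Remark~2 in \cite{Grafakos2016a}, and then evaluate the Sobolev norm of the symbol via the wavelet decomposition of $M_j^1$ and Lemma~\ref{08311}. A preliminary observation is that every wavelet $\om_{1,k}\om_{2,l}$ in $M^1_{j,\ga}$ has $|k|,|l|\ge N$ and sits at level $\ga$, so its support is contained in $\{|\xi|\gs 2^{-\ga}\}\times\{|\eta|\gs 2^{-\ga}\}$, while the annular localization of $M_j$ forces $|\xi|,|\eta|\le 2^{j+1}$. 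Hence $\widetilde M^1_{j,\ga}$ is supported in $E\times E$, and $f,g$ may be harmlessly replaced by the inverse Fourier transforms of $\wh f\chi_E,\wh g\chi_E$, which produces the factors $\|\wh f\chi_E\|_{L^2}$ and $\|\wh g\chi_E\|_{L^2}$ in the conclusion. The cited bilinear estimate then gives, for $1<r<4$ and any $s'>\tf{2n}r$, the bound $\|\widetilde B^1_{j,\ga,1}(f,g)\|_{L^1}\le C\|\widetilde M^1_{j,\ga}\|_{L^r_{s'}}\|\wh f\chi_E\|_{L^2}\|\wh g\chi_E\|_{L^2}$ with $C$ independent of the support radius of the symbol, reducing matters to a Sobolev estimate on $\widetilde M^1_{j,\ga}$. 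Crucially I would take $s'=\tf{2n}r+\ep$ (strictly smaller than~$s$) and let $\ep\to 0^+$ at the end.

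The heart of the argument is the bound $\|\widetilde M^1_{j,\ga}\|_{L^r_{s'}}\ls A\,2^{-j(\la-1)}\,2^{\ga(s'+1-s)}$, which I would derive in three short substeps. \emph{(i)} From the level-$\ga$ disjointness of wavelet supports one has $\|M^1_{j,\ga}\|_{L^r}^r\sim 2^{r\ga(n-\tf{2n}r)}\sum_{k,l}|a_{k,l}|^r$; combining this with the sharp $\ell^r$-form of Corollary~\ref{06061} supplied by Lemma~\ref{08311}, namely $\sum_{k,l}|a_{k,l}|^r\ls (A\,2^{-j\la})^r 2^{-r\ga(s+n-\tf{2n}r)}$, yields $\|M^1_{j,\ga}\|_{L^r}\ls A\,2^{-j\la}\,2^{-\ga s}$. \emph{(ii)} Multiplication by $(\xi,\eta)$ costs a factor $2^{j+1}$ on the support and differentiation of a level-$\ga$ wavelet costs~$\sim 2^\ga$, so $\|\widetilde M^1_{j,\ga}\|_{L^r}\ls 2^{j+\ga}\|M^1_{j,\ga}\|_{L^r}\ls A\,2^{-j(\la-1)}\,2^{\ga(1-s)}$. \emph{(iii)} Since $\widetilde M^1_{j,\ga}$ is a combination of level-$\ga$ wavelets, its Fourier support lies in $|\cdot|\ls 2^\ga$, and Bernstein's inequality provides $\|\widetilde M^1_{j,\ga}\|_{L^r_{s'}}\ls 2^{\ga s'}\|\widetilde M^1_{j,\ga}\|_{L^r}$. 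Letting $s'\to(\tf{2n}r)^+$ produces the claimed exponent $\ga(1+\tf{2n}r-s)$ in the range $1<r<4$.

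For $r=4$ the same outline applies, except that the bilinear multiplier theorem is only available at the endpoint with a logarithmic loss in the support radius and in the wavelet scale, which combine for a level-$\ga$ symbol supported in a ball of radius $\sim 2^j$ to give the prefactor $n(j+\ga)$ in $C(j,\ga)$. The main technical obstacle is the careful bookkeeping in this wavelet/Sobolev computation: one must separate the smoothness used in the bilinear multiplier theorem (as close to $\tf{2n}r$ as possible) from the full smoothness $s$ of the hypothesis, the latter being extracted from $M_j$ through Lemma~\ref{08311} to produce the crucial $2^{-\ga s}$ decay of $\|M^1_{j,\ga}\|_{L^r}$; a secondary subtlety is to verify in substep (iii) that taking gradients and multiplying by $(\xi,\eta)$ indeed preserves the level-$\ga$ frequency localization up to uniform constants.
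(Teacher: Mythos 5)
Your overall reduction is sound in outline: replacing $f,g$ by the inverse transforms of $\wh f\chi_E,\wh g\chi_E$ is legitimate, and your step (i) — using the finite overlap of level-$\ga$ wavelet supports together with Lemma~\ref{08311} to get $\|M^1_{j,\ga}\|_{L^r}\ls A2^{-j\la}2^{-\ga s}$, then paying $2^{j+\ga}$ for $(\xi,\eta)\cdot\nabla$ — is exactly the coefficient bound $B\ls A2^{-j\la}2^{-s\ga}$ that the paper also extracts. But two of your steps do not hold as written. In step (iii) you assert that $\widetilde M^1_{j,\ga}$ has Fourier support in $|\cdot|\ls 2^\ga$ and invoke Bernstein; this is false, since the Daubechies wavelets are compactly supported in the $(\xi,\eta)$ variable and only of class $\mathcal C^k$, hence not band-limited. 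Passing from $\|\widetilde M^1_{j,\ga}\|_{L^r}$ to $\|\widetilde M^1_{j,\ga}\|_{L^r_{s'}}$ with only a $2^{\ga s'}$ loss requires controlling the nonlocal operator $(I-\Delta)^{s'/2}$ on a sum of bumps whose images no longer have disjoint supports (e.g.\ via an atomic or molecular characterization of $F^{s'}_{r,2}$); that is a genuine argument which is missing, not a one-line scaling. Moreover the final limit $\ep\to 0^+$ is illegitimate: the constant in the cited bound $\|T_\si\|_{L^2\times L^2\to L^1}\le C\|\si\|_{L^r_{s'}}$ blows up as $s'\dar\tf{2n}r$, so at best you obtain the exponent $\ga(1+\tf{2n}r-s+\ep)$ with a constant $C_\ep$ — sufficient for Theorem~\ref{04141} because $s>\tf{2n}r+1$ is strict, but weaker than the proposition as stated.

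The most serious gap is the case $r=4$, which is the only case actually used downstream (both the deduction of Theorem~\ref{04071} and the Bochner--Riesz application take $r=4$). The quoted Remark~2 of \cite{Grafakos2016a} covers only $1<r<4$, and you handle $r=4$ by positing an endpoint bilinear multiplier theorem ``with a logarithmic loss in the support radius and in the wavelet scale'' yielding exactly the prefactor $n(j+\ga)$. No such statement is available to cite; proving it is precisely what the paper's appendix does. There the wavelets are $L^r$-normalized, the coefficient sequence is bounded in $\ell^r$ by $A2^{-j\la}2^{-s\ga}$ (your step (i)), and a level-set row/column counting argument in the spirit of \cite{Grafakos2015, Grafakos2016a} is run: the coefficients of size about $2^{-\tau}B$ are split into heavy columns and the remainder, the two resulting operator bounds are balanced by choosing $N_1=N_2=2^{\tau r/2}$, and one obtains the factor $2^{-\tau(1-\tf r4)}$, which sums geometrically for $r<4$ but degenerates at $r=4$, where the bound $\tau\le\tau_m\approx n(j+\ga)/2$ on the number of admissible levels is what produces $n(j+\ga)$. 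So the $r=4$ clause of your outline is not a routine endpoint citation but the heart of the proposition, and as written your proposal assumes it rather than proves it.
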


In both cases we have good decay in $j$. 


\begin{cor}\label{dj}
For the diagonal part we have
$$
\|T_{j,\ga}^1(f,g)\|_{L^1}\le C A C(j,\ga)(j+\ga)\|f\|_{L^2}\|g\|_{L^2}.
$$
\end{cor}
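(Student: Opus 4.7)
The plan is to turn the pointwise estimate \eqref{Tj1} into an $L^1$ bound by reducing to Proposition~\ref{BL} via a scaling argument and then integrating in $t$. First I would take $L^1(\bbr^n)$ norms in \eqref{Tj1} and apply Minkowski's integral inequality to obtain
\[
\|T^1_{j,\ga}(f,g)\|_{L^1}\le \int_0^\infty\|\widetilde B^1_{j,\ga,t}(f,g)\|_{L^1}\,\f{dt}{t}.
\]
The integrand is then handled at the fixed scale $t=1$. Performing the change of variables $(\xi,\eta)\mapsto t^{-1}(\xi,\eta)$ in the definition of $\widetilde B^1_{j,\ga,t}$ yields the identity $\widetilde B^1_{j,\ga,t}(f,g)(x)=\widetilde B^1_{j,\ga,1}(f^t,g^t)(x/t)$ with $f^t(y):=f(ty)$; tracking the $L^1$ and $L^2$ rescalings (whose powers of $t$ cancel exactly), Proposition~\ref{BL} gives
\[
\|\widetilde B^1_{j,\ga,t}(f,g)\|_{L^1}\le C\,A\,C(j,\ga)\,\|\wh f\,\chi_{E/t}\|_{L^2}\,\|\wh g\,\chi_{E/t}\|_{L^2},
\]
where $E/t=\{\xi\in\bbr^n:\,C 2^{-\ga}/t\le |\xi|\le 2^j/t\}$ is the dilated annulus.

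Next I would extract a square--function--type bound in $t$ by Cauchy--Schwarz with respect to the measure $dt/t$,
\[
\int_0^\infty\|\wh f\,\chi_{E/t}\|_{L^2}\|\wh g\,\chi_{E/t}\|_{L^2}\,\f{dt}{t}\le \prod_{h\in\{f,g\}}\Big(\int_0^\infty\|\wh h\,\chi_{E/t}\|_{L^2}^2\,\f{dt}{t}\Big)^{1/2}.
\]
For each factor, Tonelli's theorem reduces matters to computing, for a fixed $\xi\in\bbr^n\setminus\{0\}$, the $dt/t$-measure of the set $\{t>0:\xi\in E/t\}=[C 2^{-\ga}/|\xi|,2^j/|\xi|]$. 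Its logarithmic length is independent of $\xi$ and equal to $(j+\ga)\log 2+O(1)$, so Plancherel's theorem delivers
\[
\int_0^\infty\|\wh h\,\chi_{E/t}\|_{L^2}^2\,\f{dt}{t}\le C(j+\ga)\|h\|_{L^2}^2.
\]
This is exactly the source of the extra $(j+\ga)$ factor appearing in the statement of Corollary~\ref{dj}; combining the three ingredients above yields the claimed estimate.

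The main technical point is the scaling identity and its interaction with the annular localization in Proposition~\ref{BL}: one has to verify that the multiplier $(t\xi,t\eta)\cdot\nabla M^1_{j,\ga}(t\xi,t\eta)$ does fall under the hypotheses of that proposition after rescaling, with the resulting $L^2$ cutoff transforming precisely from $E$ to $E/t$. Once this is in hand, the rest is a square--function argument in the $L^2$--Hilbert direction integrated against $dt/t$, and the Plancherel step is routine. No new smoothness or decay of $M_j$ is used beyond what is already encoded in $C(j,\ga)$.
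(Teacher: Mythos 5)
Your proposal is correct and follows essentially the same route as the paper: rescale to reduce $\widetilde B^1_{j,\ga,t}$ to the $t=1$ case of Proposition~\ref{BL} (your dilation $f^t(y)=f(ty)$ is just a renormalized version of the paper's $\wh f_t(\xi)=t^{-n/2}\wh f(\xi/t)$, and $\|\wh f_t\chi_E\|_{L^2}=\|\wh f\chi_{E/t}\|_{L^2}$), then apply Cauchy--Schwarz in $dt/t$, Tonelli, and the logarithmic measure of $\{t:\xi\in E/t\}$ to produce the factor $j+\ga$. No gaps; this matches the paper's argument step for step.
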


\begin{proof}
From \eqref{Tj1} we know that
$$\|T_{j,\ga}^1(f,g)\|_{L^1}\le
\int_0^{\nf}\|\widetilde B^1_{j,\ga,t}(f,g)\|_{L^1}\f{dt}{t}
=
\int_0^{\nf}\|\widetilde B^1_{j, \ga,1}(f_t,g_t)\|_{L^1}\f{dt}{t},
$$
where $\wh f_t(\xi)=t^{-n/2}\wh f(\xi/t)$, and $\wh g_t(\xi)=t^{-n/2}\wh g(\xi/t)$. 
Applying Proposition~\ref{BL}, the last integral is dominated by
\begin{align*}
& C  A 
\int_0^{\nf} C(j,\ga) \|\wh f_t\chi_{E}\|_{L^2}\|\wh g_t\chi_{E}\|_{L^2}\f{dt}t\\
\le &C  C(j,\ga)\Big(\int_{\rn}\int_0^\nf|\wh f_t\chi_{E}|^2\f{dt}td\xi\Big)^{1/2}
\Big(\int_{\rn}\int_0^\nf|\wh g_t\chi_{E}|^2\f{dt}td\xi\Big)^{1/2}.
\end{align*}
The double integral involving $f_t$ is bounded by
$\int|\wh f(\xi)|^2\int_{C2^{-\ga}/|\xi|}^{2^j/|\xi|}\tf{dt}td\xi$, which is less than
$C(j+\ga)\|f\|_{L^2}^2$.
Hence the last expression is controlled by
$C  A C(j,\ga)(j+\ga)\|f\|_{L^2}\|g\|_{L^2}$.
\end{proof}

\medskip

We next deal with  
the off-diagonal parts. More specifically, we  consider 
$B_{j,\ga,t}^2$, since the analysis of $B_{j,\ga,t}^3$ is similar in view of symmetry.
Recall that 
$$
B_{j,\ga,t}^2=\iint_{\mathbb R^{2n}}M^2_{j,\ga}(t\xi,t\eta)\wh f(\xi)\wh g(\eta)e^{2\pi i x\cdot (\xi+\eta)}d\xi d\eta.
$$
We denote $(\xi,\eta)\cdot(\nabla M_{j,\ga}^2)(\xi,\eta)$ by 
$\widetilde M_{j,\ga}^2(\xi,\eta)$.
Then similar to $B^2_{j,t}(f,g)(x)$ we define 
$$
\widetilde B^2_{j,\ga,t}(f,g)(x)
  =\iint_{\mathbb R^{2n}}  \wtd M_{j,\ga}^2(t\xi,t\eta)\wh f(\xi)\wh g(\eta)
 e^{2\pi ix\cdot (\xi+\eta)}d\xi d\eta .
 $$
With   these notations, by the fundamental theorem of Calculus, we have
\begin{align*}
(B^2_{j,\ga,t}(f,g)(x))^2=\,\,&\,\, 2\int_0^tB^2_{j,\ga,s}(f,g)(x)s\f{dB^2_{j,\ga,s}(f,g)(x)}{ds}\f{ds}{s}\\
\le \,\,&\,\, 2\int_0^{\nf}|B^2_{j,\ga,s}(f,g)(x)||\widetilde B^2_{j,\ga,s}(f,g)(x)|\f{ds}{s}\\
\le \,\,&\,\, 2 G_{j,\ga}(f,g)(x)\wtd G_{j,\ga}(f,g)(x),
\end{align*}
where we set 
\begin{align*}
G_{j,\ga}(f,g)(x)& =\bigg(\int_0^{\nf}|B^2_{j,\ga,s}(f,g)(x)|^2\f{ds}{s}\bigg)^{1/2} \\
\wtd G_{j,\ga}(f,g)(x)& =     \bigg(\int_0^{\nf}|\widetilde B^2_{j,\ga,s}(f,g)(x)|^2\f{ds}{s}\bigg)^{1/2}.
\end{align*}  
These 
$g$-functions are bounded from $L^2\times L^2$ to $L^1$ with good decay in $j$.  Indeed, we have the following.

\begin{lm}\label{gfn}
For any $\ep>0$
there exists a constant $C_\ep$ independent of $j$ such that for all $f,g\in \mathcal S(\rn)$,
$$
\|G_{j,\ga}(f,g)\|_{L^1}\le C_\ep A 2^{-j\la}2^{(n+1)\ga}\|f\|_{L^2}\|g\|_{L^2}
$$
and
$$
\|\widetilde G_{j,\ga}(f,g)\|_{L^1}\le C_\ep A 2^{-j(\la-1)}2^{(n+1)\ga}\|f\|_{L^2}\|g\|_{L^2}.
$$
\end{lm}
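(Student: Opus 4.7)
My plan is to exploit the product structure $\om_{k,l}(\xi,\eta)=\om_{1,k}(\xi)\om_{2,l}(\eta)$ of the wavelets in $M_{j,\ga}^2$, combined with the fact that the index $l$ is confined to $|l|\le N$, a set of $O(1)$ cardinality independent of $j$ and $\ga$. I would write
\begin{equation*}
B^2_{j,\ga,s}(f,g)(x)=\sum_{|l|\le N}G^l_s(x)\,\Phi^l_s(x),
\end{equation*}
where $G^l_s$ is the linear operator on $g$ with multiplier $\om_{2,l}(s\,\cdot)$ and $\Phi^l_s$ is the linear multiplier operator on $f$ with symbol $m^l_{j,\ga}(s\,\cdot):=\sum_k a_{k,l}\om_{1,k}(s\,\cdot)$. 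The strategy is to majorize $G^l_s$ pointwise by a Hardy--Littlewood maximal function of $g$ (with a mild loss in $\ga$), which reduces the $g$-function $G_{j,\ga}$ to a product of this maximal function and a classical square function in the $\Phi^l_s$; the latter is then treated via Plancherel.

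For the majorization I would verify that $\om_{2,l}$ is a bump of $L^\infty$-norm $\ls 2^{\ga n/2}$ on a cube of side $\ls 2^{-\ga}$, so that its inverse Fourier transform is Schwartz with radial decreasing majorant of $L^1$-norm $\ls 2^{\ga n/2}$; standard dilation--maximal arguments then yield $|G^l_s(x)|\ls 2^{\ga n/2}M(g)(x)$ uniformly in $s>0$ and $|l|\le N$. A Cauchy--Schwarz in the finite sum over $l$, followed by integration in $ds/s$ and Cauchy--Schwarz in $L^1(\rn)$ together with the $L^2$-boundedness of $M$, reduces the task to the $L^2$ estimate
\begin{equation*}
\Big(\sum_{|l|\le N}\int_0^\nf\|\Phi^l_s\|_{L^2(\rn)}^2\,\f{ds}s\Big)^{1/2}\ls A\,2^{-j\la}\,2^{(n/2+1)\ga}\,\|f\|_{L^2},
\end{equation*}
which when multiplied by $2^{\ga n/2}$ from the maximal step produces the claimed $2^{(n+1)\ga}$.

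For this square-function estimate I would apply Plancherel and Fubini to rewrite the left-hand side as $\sum_l\int|\wh f(\xi)|^2\,I_l(\hat\xi)\,d\xi$, where $I_l(\hat\xi)=\int_0^\nf|m^l_{j,\ga}(s\hat\xi)|^2\,\f{ds}s$. By the preamble the wavelets at a fixed level $\ga$ have essentially disjoint supports, so only $O(1)$ of the $\om_{1,k}$ contribute pointwise; combined with the localization of $M_{j,\ga}^2$ to $|(\xi,\eta)|\approx 2^j$ (inherited from $M_j$) and $|\eta|\ls 2^{-\ga}$, the $s$-integration is effectively over a $\log$-interval of length $O(1)$. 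Plugging in the size bound for $a_{k,l}$ from Corollary~\ref{06061} together with $\|\om_{1,k}\|_{L^\infty}\ls 2^{\ga n/2}$ produces an estimate with substantial room to spare, so the exponent $(n+1)\ga$ in the statement is in fact well above the sharp one, which also explains why no $\ep$ actually appears in the bound. The second inequality for $\wtd G_{j,\ga}$ follows by the same argument with $M_{j,\ga}^2$ replaced by $(\xi,\eta)\cdot\nabla M_{j,\ga}^2$: this trades one factor of $2^{-j\la}$ for $2^{-j(\la-1)}$ but leaves every $\ga$-dependence unchanged.

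The main technical obstacle I foresee is the careful bookkeeping of wavelet supports when evaluating $I_l(\hat\xi)$: one has to check that, for each radial direction, the contributions from different $k$ decouple cleanly, so that the naturally appearing $\sum_k|a_{k,l}|^2$ does not overwhelm the $2^{-2j\la}$ decay in \eqref{e04141}, and that summing in $|l|\le N$ costs only an $O(1)$ factor. Given the essential disjointness enforced in the preamble and the comfortable exponent $(n+1)\ga$ built into the statement, this bookkeeping should go through without surprise.
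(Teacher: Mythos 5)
Your argument is essentially the paper's: the same factorization for each fixed $|l|\le N$ into a $g$-factor dominated pointwise by $2^{\ga n/2}M(g)$ and an $f$-multiplier localized to $|\xi|\approx 2^j$, with the $ds/s$ integration contributing only an $O(1)$ logarithmic factor and Corollary~\ref{06061} supplying the coefficient bound; the paper merely normalizes the $f$-symbol to a bounded compactly supported function $\si$ and uses a dilation identity rather than computing $I_l(\hat\xi)$ directly via Plancherel. The only slip is your remark that replacing $M^2_{j,\ga}$ by $(\xi,\eta)\cdot\nabla M^2_{j,\ga}$ ``leaves every $\ga$-dependence unchanged'': the $\xi_i\p_{\xi_i}$ terms cost an extra factor $2^{\ga}$ (while the $\eta_i\p_{\eta_i}$ terms cost no $2^j$ precisely because $|l|\le N$ keeps $|\eta_i|\lesssim 2^{-\ga}$), but this is harmless since the stated bound $2^{(n+1)\ga}$ leaves ample room, exactly as in the paper's sharper estimate $2^{-\ga(s-\tf{2n}{r}-1)}$.
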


The proof of this lemma is inspired by  
\cite{Grafakos2015}.

\begin{proof}
We will focus on $\wtd G_{j,\ga}$ first. 
For $\wtd G_{j,\ga}$ we need to consider two typical cases, the derivative falling on $\xi$
and the derivative falling on $\eta$.

Let us consider the multiplier 
$$
\xi_1\p_{\xi_1}M_{j,\ga}^2 = \sum_{k,l}
a_{k,l}v_{k}(\xi)\om_{2,l}(\eta)
$$
 with 
$v_{k}(\xi)=\xi_1\p_{\xi_1}\om_{1,k}(\xi)$. 
Using \eqref{defomega1k} we observe that 
$$
|v_k(\xi)|\le C 2^j2^{\ga n/2}2^{\ga}=C2^j2^{\ga(n+2)/2}.
$$
The $g$-function related to $\xi_1\p_{\xi_1}M_{j,\ga}^2$ is denoted by $\wtd G_{j,\ga}^1(f,g)$.
By the definition \eqref{DefMj2}, 
for a  fixed $\ga$  at most $N$  of $\om_{2,l}$ are involved, so we can consider  a single fixed $l$.
Observe that
\begin{align*}
&\iint_{\bbr^{2n}}\sum_ka_{k,l}v_k(\xi)\om_{2,l}(\eta)\wh f(\xi)\wh g(\eta)e^{2\pi ix\cdot (\xi,\eta)}
d\xi d\eta\\
=&\|a\|_{\ell^\nf}2^{\ga(n+2)/2}2^j \bigg(\int_{\rn}\om_{2,l}(\eta)\wh g(\eta)e^{2\pi ix\cdot \eta}d\eta
\bigg) \\
& \qquad\qquad\qquad\qquad\qquad
\int_{\rn}\f{\sum_ka_{k,l}v_k(\xi)}{\|a\|_{\nf}2^{\ga(n+2)/2}2^j}\wh f(\xi)e^{2\pi ix\cdot\xi}d\xi.
\end{align*}
By $|v_k|\le C2^{\ga(n+2)/2}2^j$, 
$|a_{k,l}|\le \|a\|_{\ell^\nf}$, and  the disjointness of the supports of $v_k$,
we know that
$\si(\xi):=(\sum_ka_{k,l}v_k(\xi))/(\|a\|_{\ell^\nf}2^{\ga(n+2)/2}2^j)$ is a 
compactly supported bounded function. Hence the bilinear operator related to the multiplier $\sum_ka_{k,l}v_k(\xi)\om_{2,l}(\eta)$ is  pointwise bounded by 
\begin{equation}\label{poinwest}
C\|a\|_{\ell^\nf}2^{\ga(n+1)}2^jM(g)(x)T_{\si}(f)(x),
\end{equation}
where
$T_{\si}(f)$ satisfies that
$\|T_{\si}(f)\|_{L^2}\le C\|\wh f\chi_{F}\|_{L^2}$ with 
$$
F=\{\xi\in \mathbb R^n: 2^{j-1}\le |\xi|\le 2^{j+1}\}.
$$

The operator $\wtd G_{j,\ga}^1$ is then bounded from $L^2\times L^2$ to $L^1$. Indeed we can estimate it by 
a standard dilation argument as follows.
Setting $\wh f_s(\xi)=s^{-n/2}\wh f(\xi/s)$, and $\wh g_s(\xi)=s^{-n/2}\wh g(\xi/s)$, we have
 \begin{align*}
 &\int_{\mathbb R^n}\wtd G_{j,\ga}^1(f,g)(x)dx\\
 =&\int_{\mathbb R^n}\bigg[\int_0^{\nf}\Big|
 \iint_{\bbr^{2n}} \sum_{k,l}
a_{k,l}v_{k}(s\xi)\om_{2,l}(s\eta)\wh f(\xi)\wh g(\eta)e^{2\pi ix\cdot (\xi,\eta)}
d\xi d\eta\Big|^2\f{ds}s\bigg]^{\frac12}\! dx\\
=&\int_{\mathbb R^n} \!\! \bigg[ \! \int_0^{\nf}\Big|
 \iint_{\bbr^{2n}} \sum_{k,l}
a_{k,l}v_{k}(\xi)\om_{2,l}(\eta)\wh f_s(\xi)\wh g_s(\eta)e^{2\pi i \f xs\cdot (\xi,\eta)}
\f{ d\xi d\eta}{s^n}\Big|^2\f{ds}s\bigg]^{\frac12}\! dx. 
\end{align*}
Since  there are only finitely many $l$ in the sum above,  
we can use the pointwise estimate~\eqref{poinwest}
to estimate the last displayed expression  by 
\begin{align*}
  &C  \|a\|_{\ell^\nf}2^j2^{(n+1)\ga}\int_{\rn}
\Big(\int_0^\nf \Big|s^{-n/2}M(g)(x)T_{\si}(f_s)(s^{-1}x) \Big|^2\f{ds}{s}\Big)^{1/2}dx\\
\le &C  \|a\|_{\ell^\nf}2^j2^{(n+1)\ga}\|M(g)\|_{L^2}\bigg(\int_0^{\nf}\int_{\mathbb R^n}s^{-n}
|\wh f(\xi/s) |^2\chi_{F}(\xi) d\xi\frac{ds}{s} \bigg)^{\frac12}\\
\le &  C \|a\|_{\ell^\nf}2^j2^{(n+1)\ga}\|g\|_{L^2}\Big(\int_{\mathbb R^n}|\wh f(\xi)|^2
\int_{(2^{j-1})/|\xi|}^{(2^{j+1})/|\xi|}\f{ds}sd\xi\Big)^{\frac12}.
\end{align*}
The integral with respect to $s$ is $\log\f{2^{j+1}}{2^{j-1}}\le C$. This, combined
with the bound of $\|a\|_{\ell^\nf}\le C A 2^{-j\la}2^{-(s+n-\tf{2n}{r})\ga}$ obtained in Corollary~\ref{06061}, shows that the last displayed 
expression is smaller than
\begin{equation}\label{gj1}
C \|a\|_{\ell^\nf}2^j2^{(n+1)\ga}\|g\|_{L^2}\|f\|_{L^2}\le C A 2^{-j(\la-1)}2^{-\ga (s-\tf {2n}r-1)}\|g\|_{L^2}\|f\|_{L^2}.
\end{equation}

When the derivative falls on $\eta$, for example we have differentiation with respect to $\eta_1$, 
using the notation $v_{l}(\eta)=\eta_1\p_{\eta_1}\om_{2,l}(\eta)$
we have a similar representation
\begin{align*}
&\iint_{\bbr^{2n}}\sum_ka_{k,l}\om_{1,k}(\xi)v_l(\eta)\wh f(\xi)\wh g(\eta)e^{2\pi ix\cdot (\xi,\eta)}
d\xi d\eta\\
=&\|a\|_{\ell^\nf}2^{\ga n/2} \bigg(\int_{\rn}v_l(\eta)\wh g(\eta)e^{2\pi ix\cdot \eta}d\eta\bigg)
\int_{\rn}\f{\sum_ka_{k,l}\om_{1,k}(\xi)}{\|a\|_{\ell^\nf}2^{\ga n/2}}\wh f(\xi)e^{2\pi ix\cdot\xi}d\xi.
\end{align*}
The   integral in the parenthesis in the last line is  dominated by
$2^{\ga n/2}M(g)(x)$   as  both
$\p_1(\om_{2,l})^{\vee}(x)e^{2\pi ix\cdot l}$ and $(\om_{2,l})^{\vee}(x)l_1e^{2\pi ix\cdot l}$
are Schwartz functions, and the number of the second type of functions is finite because 
$|l|\le N$. The bilinear operator related to the multiplier
$\sum_ka_{k,l}\om_{1,k}(\xi)v_l(\eta)$
is therefore bounded by  
$$
C\|a\|_{\ell^\nf}2^{\ga n}M(g)(x)T_{\si'}(f)(x) ,
$$ 
where $T_{\si'}$ satisfies the same property as
$T_\si$.
For the $L^1$ norm of the $g$-function $\wtd G^2_{j,\ga}$ related to the multiplier $\sum_ka_{k,l}\om_{1,k}(\xi)v_l(\eta)$
we apply an argument similar to that used for $\|\wtd G^1_{j,\ga}\|_{L^1}$. We obtain 
\begin{equation}\label{gj2}
\|\wtd G^2_{j,\gamma}\|_{L^1}\le 
C A 2^{-j\la}2^{-\ga (s-\tf {2n}r)}
\|g\|_{L^2}\|f\|_{L^2} .
\end{equation}
This estimate and
\eqref{gj1}
show that 
$$
\|\wtd G_{j,\ga}(f,g)\|_{L^1}\le C A 2^{-j(\la-1)}2^{-\ga (s-\tf {2n}r-1)}\|f\|_{L^2}\|g\|_{L^2}.
$$

For $G_{j,\ga}(f,g)$ an analogous, but simpler argument, applied to the standard representation 
$\sum a_{k,l}\om_{1,k}(\xi)\om_{2,l}(\eta)$ yields
$$
\|G_{j,\ga}(f,g)\|_{L^1}\le C A 2^{-j\la}2^{-\ga (s-\tf {2n}r-1)}\|f\|_{L^2}\|g\|_{L^2}.
$$
The additional decay of $2^{-j}$ comes from the fact that in the multiplier of $B_{j,\ga,s}^2$  we 
miss the term $(\xi,\eta)$, which is  controlled by $2^j$.
\end{proof}


\begin{cor}\label{odj}
For the off-diagonal part the estimate below holds:
\begin{equation}\label{e12093}
\|T_{j,\ga}^2(f,g)\|_{L^1}\le C A 2^{-j(\la-1/2)}2^{-\ga (s-\tf {2n}r-1)}\|f\|_{L^2}\|g\|_{L^2}.
\end{equation}
\end{cor}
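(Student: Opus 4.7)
The plan is to combine the pointwise squared bound
$$
(B^2_{j,\ga,t}(f,g)(x))^2\le 2\, G_{j,\ga}(f,g)(x)\, \widetilde G_{j,\ga}(f,g)(x),
$$
derived via the fundamental theorem of calculus just before Lemma~\ref{gfn}, with the $L^1$ estimates on the two $g$-functions supplied by Lemma~\ref{gfn}. Since the right-hand side does not depend on $t$, I can take the supremum over $t>0$ on the left and extract a square root to obtain the pointwise bound
$$
T^2_{j,\ga}(f,g)(x)\le \sqrt{2}\, G_{j,\ga}(f,g)(x)^{1/2}\, \widetilde G_{j,\ga}(f,g)(x)^{1/2}.
$$

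Next I would integrate this inequality over $\bbr^n$ and invoke the Cauchy--Schwarz inequality in $L^1$ to split the right-hand side as
$$
\|T^2_{j,\ga}(f,g)\|_{L^1}\le \sqrt{2}\,\|G_{j,\ga}(f,g)\|_{L^1}^{1/2}\,\|\widetilde G_{j,\ga}(f,g)\|_{L^1}^{1/2}.
$$
Substituting the bounds proved in Lemma~\ref{gfn} (specifically the refined estimates \eqref{gj1}--\eqref{gj2} obtained during its proof), which give decay $2^{-j\la}2^{-\ga(s-\tf{2n}r-1)}$ for $G_{j,\ga}$ and $2^{-j(\la-1)}2^{-\ga(s-\tf{2n}r-1)}$ for $\widetilde G_{j,\ga}$, both accompanied by the factor $\|f\|_{L^2}\|g\|_{L^2}$, the product of square roots redistributes the $2^{-j}$ saving evenly and yields the geometric-mean frequency decay $2^{-j(\la-1/2)}$ while reassembling the full spatial factor $2^{-\ga(s-\tf{2n}r-1)}$.

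There is no serious obstacle at this stage: the analytic content---the wavelet decomposition of $M^2_{j,\ga}$, the pointwise domination of each contributing multiplier by a product of a Hardy--Littlewood maximal function and a linear frequency-localized operator of $L^2$-type, and the resulting $g$-function $L^1$ bounds---is already packaged inside Lemma~\ref{gfn}. The corollary is essentially a Cauchy--Schwarz bookkeeping step that interpolates between the bound for $G_{j,\ga}$ (which knows no derivative) and the bound for $\widetilde G_{j,\ga}$ (which loses a factor of $2^j$), producing the sharp gain $2^{-j(\la-1/2)}$. By the symmetry noted in the text, the identical scheme applies to $T^3_{j,\ga}$, so both off-diagonal contributions satisfy the same estimate.
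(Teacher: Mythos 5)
Your proposal is correct and follows essentially the same route as the paper: the pointwise bound $T^2_{j,\ga}\le \sqrt2\,(G_{j,\ga}\widetilde G_{j,\ga})^{1/2}$ from the fundamental-theorem-of-calculus computation, Cauchy--Schwarz in $L^1$, and then the $g$-function bounds, whose geometric mean yields $2^{-j(\la-1/2)}2^{-\ga(s-\tf{2n}r-1)}$. You are also right (and appropriately careful) to invoke the refined $\ga$-decay from \eqref{gj1}--\eqref{gj2} in the proof of Lemma~\ref{gfn} rather than the $2^{(n+1)\ga}$ factor in its statement, which is exactly what the paper's own argument implicitly does.
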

\begin{proof}
By the calculation before Lemma \ref{gfn} we have the pointwise control
$$
T^2_{j,\gamma}(f,g)(x)\le \sqrt2 (G_{j,\ga}(f,g)(x)\wtd G_{j,\ga}(f,g)(x))^{1/2} , 
$$
which, combined with Lemma 
\ref{gfn}, implies that
\begin{align*}
\|T^2_j(f,g)\|_{L^1}\le &\big\|\sqrt2 \big( G_{j,\ga}(f,g) \wtd  G_{j,\ga}(f,g) \big)^{1/2}\big\|_{L^1}\\
\le &C(\| G_{j,\ga}(f,g) \|_{L^1}\|\wtd  G_{j,\ga}(f,g) \|_{L^1})^{1/2}\\
\le & C A \Big(2^{-j\la}2^{-j(\la-1)}2^{-2\ga (s-\tf {2n}r-1)}\|f\|_{L^2}^2\|g\|_{L^2}^2\Big)^{1/2}\\
= & C A 2^{-j(\la-1/2)}2^{-\ga (s-\tf {2n}r-1)}\|f\|_{L^2}\|g\|_{L^2}.
\end{align*}
In this case we have nice decay in $j$ for $T_{j,\ga}^2$ since 
$\la>1>1/2$. 
\end{proof}

We collect the known results to finish the proof of Theorem~\ref{04141}.

\begin{proof}[Proof of Theorem~\ref{04141}]
We observe that 
$$
T(f,g)(x)\le\sum_{j=0}^\nf \sum_\ga |T_{j,\ga}(f,g)(x)|.
$$
It is straightforward to verify that
$$
\sum_{\ga}C(j,\ga)(j+\ga)\le C_\ep2^{-j(\la-1-\ep)}
\le C_\ep 2^{-j(\la-1)/2},
$$
if we choose $\ep$ small enough.
So we obtain
\begin{equation}\label{e12092}
\sum_j\sum_\ga\|T_{j,\ga}^1(f,g)\|_{L^1}\le \sum_jC A 2^{-j \f{\la-1}2}\|f\|_{L^2}\|g\|_{L^2}\le
C  A \|f\|_{L^2}\|g\|_{L^2}.
\end{equation}
This concludes the argument of the diagonal part. 
A similar argument using
\eqref{e12093} 
show the boundedness of the off-diagonal part.
Hence we deduce the conclusion of Theorem~\ref{04141}. 
\end{proof}


\section{Applications to bilinear maximal Bochner-Riesz}

Theorem~\ref{04141} can also be used to study the boundedness of the 
maximal bilinear  Bochner-Riesz means. These are the means
\begin{equation}\label{av}
A^\la_{t}(f,g)(x)
= \iint_{\mathbb R^{2n}}\wh f(\xi)\wh g(\eta)\big(1-|t\xi |^2-|t \eta |^2\big)_+^{\la}e^{2\pi i x\cdot (\xi+\eta)}d\xi d\eta,
\end{equation}
which coincide with $B_{1/t}^\la(f\otimes g)(x,x)$ with $B^\la_{1/t}$ the linear 
Bochner-Riesz operator
on $\bbr^{2n}$ and $x\in\rn$. For test functions  we should have
$A^\la_t(f,g)\to fg$ as $t\to0$ in   
the $L^p$ or in the pointwise sense.   
\cite{Grafakos2006a, Bernicot2015a, Jeong2017}
have proved   positive results
 for $\la=0$ and $\la>0$ respectively, 
concerning their $L^p$ convergence. 

In this section, we are concerned with the pointwise 
convergence of the means \eqref{av}, in particular with the 
boundedness of the  maximal bilinear Bochner-Riesz 
operator, which of course implies the boundedness of the
bilinear Bochner-Riesz operators in the same range.


The bilinear 
maximal Bochner-Riesz 
 operator  for $\la>0$
 is defined as
\begin{equation}\label{bBR}
T^{\la}_*(f,g)(x)=\sup_{t>0}
\Big|\int_{\mathbb R^n}\int_{\mathbb R^n}m^{\la}(t\xi,t\eta)\wh f(\xi)\wh g(\eta)e^{2\pi i x\cdot (\xi+\eta)}d\xi d\eta\Big|,
\end{equation}
where $ m^{\la}(\xi,\eta)=(1-|\xi |^2-|\eta |^2)_+^{\la}$,
which is equal to $(1-(|\xi |^2+|\eta |^2))^{\la}$ when $|(\xi,\eta)|\le 1$ and $0$ when
$|(\xi,\eta)|> 1$.

Our main theorem concerning the boundedness of bilinear maximal Bochner-Riesz means is as follows: 

 \begin{thm}\label{Main}
 When $\la> \tf{2n+3}4$, for $T^\la_*$ in \eqref{bBR}
 we have that
 $$
 \|T^\la_*(f,g)\|_{L^1}\le C\|f\|_{L^{2}}\|g\|_{L^{2}}.
 $$
 \end{thm}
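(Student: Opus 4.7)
The plan is to decompose the Bochner-Riesz multiplier
$m^{\lambda}(\xi,\eta)=(1-|\xi|^{2}-|\eta|^{2})_{+}^{\lambda}$
dyadically near the unit sphere $\{|(\xi,\eta)|=1\}\sset\bbr^{2n}$ and apply Theorem~\ref{04141} separately to each shell. Concretely, pick a nonnegative $\phi\in C_{c}^{\infty}(\bbr)$ supported in $[1/4,1]$ with $\sum_{k\ge 1}\phi(2^{k}t)=1$ for $t$ close to $0^{+}$, and set
\[
m_{k}(\xi,\eta):=(1-|\xi|^{2}-|\eta|^{2})^{\lambda}\,\phi\bigl(2^{k}(1-|\xi|^{2}-|\eta|^{2})\bigr),\qquad k\ge 1,
\]
together with the residual $m_{0}:=m^{\lambda}-\sum_{k\ge 1}m_{k}$. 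The function $m_{0}$ is smooth and compactly supported strictly inside the unit ball of $\bbr^{2n}$; as in the proof of Theorem~\ref{04071}, its dilated maximal operator is pointwise bounded by $CM(f)M(g)$ and is therefore harmless on $L^{2}\times L^{2}\to L^{1}$.

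For each $k\ge 1$, $m_{k}$ is supported in a thin shell $\{1-|(\xi,\eta)|^{2}\sim 2^{-k}\}$, which in particular lies in $\{|(\xi,\eta)|\approx 1\}$. I would apply Theorem~\ref{04141} to $m_{k}$ by placing it in the slot $M_{0}$ and taking $M_{j}\equiv 0$ for all $j\ge 1$. The decay assumption \eqref{e04141} is then vacuous (any $\la>1$ in the theorem will do), and the conclusion yields
\[
\Bigl\|\sup_{t>0}|T_{m_{k}(t\,\cdot)}(f,g)|\Bigr\|_{L^{1}}
\;\le\; C\,\|m_{k}\|_{L^{r}_{s}(\bbr^{2n})}\,\|f\|_{L^{2}}\|g\|_{L^{2}}
\]
for any admissible pair with $1<r\le 4$ and $s>2n/r+1$.

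The crux of the argument is the Sobolev estimate
\[
\|m_{k}\|_{L^{r}_{s}(\bbr^{2n})}\;\le\; C\,2^{-k(\lambda-s+1/r)}.
\]
Indeed, $m_{k}$ has amplitude $\sim 2^{-k\lambda}$, is supported on a set of $\bbr^{2n}$-measure $\sim 2^{-k}$, and each derivative contributes a factor $\sim 2^{k}$ (from either differentiating $\phi(2^{k}(1-|\zeta|^{2}))$ or $(1-|\zeta|^{2})^{\lambda}$), giving $\|D^{\alpha}m_{k}\|_{L^{r}}\ls 2^{k(|\alpha|-\lambda)}2^{-k/r}$ for integer multi-indices $\alpha$; the fractional case follows by interpolation between integer-order Sobolev spaces. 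Summing the resulting geometric series in $k\ge 1$ converges precisely when $\lambda>s-1/r$. Subject to the admissible constraints, the infimum of $s-1/r$ is $(2n-1)/r+1$, which is minimized at $r=4$ (taking $s\downarrow 2n/r+1$), producing the threshold $\lambda>(2n-1)/4+1=(2n+3)/4$, exactly matching Theorem~\ref{Main}.

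The main technical obstacle is the rigorous justification of the fractional Sobolev bound above. The cleanest route is via the Littlewood-Paley/Triebel-Lizorkin identification $L^{r}_{s}=F^{s}_{r,2}$, exploiting that after rescaling the normal variable to the sphere by $2^{k}$, $m_{k}$ becomes (essentially) a fixed Schwartz-class bump of amplitude $2^{-k\lambda}$ in local tangent-normal coordinates. The remaining ingredients (the pointwise bound on the smooth piece $m_{0}$ and the geometric summation in $k$) are standard.
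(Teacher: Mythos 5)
Your argument is correct and lands on the paper's threshold $\la>\tf{2n+3}4$ by the same overall strategy---dyadic decomposition of $m^\la$ into thin shells around the unit sphere of $\bbr^{2n}$, a harmless smooth piece $m_0$ dominated by $M(f)M(g)$, and an appeal to Theorem~\ref{04141} with $r=4$ and $s$ just above $\tf n2+1$---but you invoke Theorem~\ref{04141} in a genuinely different way. The paper first rescales each shell, setting $M_j(\xi,\eta)=m_j(2^{-j}\xi,2^{-j}\eta)$ so that the pieces sit in the dyadic annuli $|(\xi,\eta)|\approx 2^j$ required by the theorem (using that the maximal operator $T_j$ is unchanged under this dilation); after rescaling all derivatives are uniformly $O(2^{-j\la})$, so the bound $\|M_j\|_{L^4_s}\ls 2^{-j\la}2^{j(2n-1)/4}$ follows from the measure of the support alone, and the theorem's built-in summation over $j$ (with decay exponent $\la-\tf{2n-1}4>1$) finishes the proof. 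You instead keep each shell at unit scale, place it in the $j=0$ slot---which is legitimate, since the sum in Theorem~\ref{04141} runs over $j\ge0$ and the decay hypothesis is vacuous for a single term, so the theorem reduces to the single-scale bound $\|\sup_t|T_{m_k(t\cdot)}|\|_{L^1}\le C\|m_k\|_{L^4_s}\|f\|_{L^2}\|g\|_{L^2}$---and then sum the geometric series $\|m_k\|_{L^4_s}\ls 2^{-k(\la-s+1/4)}$ by hand; the two computations are equivalent under the dilation $\zeta\mapsto 2^{-k}\zeta$, which is why the thresholds coincide. The trade-off is exactly where you locate it: your route needs the fractional Sobolev estimate for non-integer $s$ arbitrarily close to $\tf n2+1$ (integer $s$ alone would yield a strictly worse threshold), obtained by interpolation between integer-order bounds or by the $F^s_{4,2}$ identification, whereas the paper's dilation trick makes the norm computation elementary and avoids interpolation altogether; conversely, your version dispenses with the rescaling step and makes transparent how the threshold $\tf{2n+3}4$ arises from optimizing $s-\tf1r$ over the admissible range $1<r\le4$, $s>\tf{2n}r+1$.
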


We fix a nonnegative   smooth function $\varphi(s)$ supported in $[-\tf34,\tf34]$ 
and a smooth function $\psi$ supported in $[\tf18,\tf58]$ such that 
$ \sum_{j=0}^\nf\psi_j(1-s)=1$ for $s\in[0,1)$, where
$\psi_j(s)=\psi(2^js)$ for $j\ge 1$ and $\psi_0=\vp$.


 We decompose the multiplier $m(\xi,\eta)=(1-(|\xi |^2+|\eta |^2))_+^{\la}$  smoothly as
  $m=\sum_{j\ge0}m_j$,
  where 
  $$
  m_j(\xi,\eta)=m(\xi,\eta)\psi_j(|(\xi,\eta)|) 
  $$
    is supported in an annulus of the form
 $$
 \{(\xi,\eta)\in\mathbb R^{2n}: {1-2^{-j}} \le |(\xi,\eta)| \le   {1-2^{-j-2}} \}
 $$
 for $j\ge 1$ and $m_0$ is supported in a ball of radius $3/4$ centered at the
 origin. 
 If 
\begin{equation}\label{e02261}
T_j(f,g)(x)=\sup_{t>0}
\bigg|\int_{\mathbb R^n}\int_{\mathbb R^n}\wh f(\xi)\wh g(\eta)m_j(t\xi,t\eta)e^{2\pi i x\cdot (\xi+\eta)}d\xi d\eta \bigg|,
\end{equation}
then 
$$
T^\la_*(f,g)(x)\le \sum_{j=0}^\nf T_j(f,g)(x).
$$
The following are straighforward facts about $T^\la_*$ and $T_j$. Let $\|T\|_{X\times Y\to Z}$
denote the norm of $T$ from $X\times Y$ to $Z$.
\begin{prop}\label{Uni} 
Assume $1<p_1,p_2\le\nf$, and 
$1/p=1/p_1+1/p_2$. Then for $\la>n-1/2$,
there exists a finite constant $C=C(p_1,p_2)$
such that
$\|T_*\|_{L^{p_1}\times L^{p_2}\to L^p}\le C$.
For any fixed $j$, there exists a finite constant $C_j(p_1,p_2)$
such that
$\|T_j\|_{L^{p_1}\times L^{p_2}\to L^p}\le C_j(p_1,p_2)$.

\end{prop}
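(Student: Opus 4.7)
The plan is to dominate both operators pointwise by a constant multiple of $M(f)(x)\,M(g)(x)$, where $M$ is the Hardy--Littlewood maximal operator, and then conclude via H\"older's inequality together with the $L^{p_i}$ boundedness of $M$ for $1<p_i\le\nf$.

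For the first assertion, I would observe that $m^\la$ is the $2n$-dimensional Bochner--Riesz multiplier, whose inverse Fourier transform has the classical Bessel representation
$$
\check{m^\la}(y)=c_{n,\la}\,|y|^{-n-\la}J_{n+\la}(2\pi|y|),\qquad y\in\bbr^{2n},
$$
and the standard asymptotic $|J_{n+\la}(r)|\lesssim r^{-1/2}$ yields $|\check{m^\la}(y)|\le C(1+|y|)^{-n-\la-1/2}$. Since $\la>n-\tf12$, the exponent $n+\la+\tf12$ strictly exceeds $2n$, so I will split it as $\alpha+\beta$ with $\alpha,\beta>n$ and use $(1+|y_1|+|y_2|)^{\alpha+\beta}\ge(1+|y_1|)^{\alpha}(1+|y_2|)^{\beta}$ together with $|(y_1,y_2)|\gtrsim |y_1|+|y_2|$ to write
$$
|\check{m^\la}(y_1,y_2)|\le C\,\Phi(y_1)\,\Psi(y_2),
$$
where $\Phi,\Psi$ are radially decreasing $L^1$ functions on $\rn$. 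This product structure survives the $t$-dilation that produces the convolution kernel of $A^\la_t$, and the standard pointwise domination by the Hardy--Littlewood maximal function, applied coordinate by coordinate, gives
$$
|A^\la_t(f,g)(x)|\le C\,M(f)(x)\,M(g)(x)
$$
uniformly in $t>0$. Taking the supremum in $t$ and applying H\"older completes the first part.

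For the second assertion, each $m_j$ is a $C^\nf$ function supported in a fixed compact annulus, so $\check{m_j}\in\mathcal S(\bbr^{2n})$ admits an integrable, radially decreasing majorant. Repeating the factorization argument with Schwartz-class decay yields the pointwise bound $T_j(f,g)(x)\le C_j\,M(f)(x)\,M(g)(x)$, where $C_j$ depends on finitely many Schwartz seminorms of $m_j$; H\"older again concludes the argument.

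The only slightly subtle step is the factorization of the $2n$-dimensional kernel as a product of $n$-dimensional integrable functions of $y_1$ and $y_2$; this is precisely where the strict inequality $\la>n-\tf12$ enters, leaving just enough room to split the decay exponent while keeping both factors integrable on $\rn$. Everything else reduces to the classical $L^p$ theory of the Hardy--Littlewood maximal operator.
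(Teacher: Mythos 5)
Your proposal is correct and follows essentially the same route as the paper: bound the kernel of $A^\la_1$ by $C(1+|y|+|z|)^{-(n+\la+1/2)}$, factor this (the paper simply takes two equal factors $(1+|\cdot|)^{-(n+\la+1/2)/2}$, each integrable precisely when $\la>n-\tf12$) to dominate $A^\la_t(f,g)$ pointwise by $CM(f)M(g)$ uniformly in $t$, and then apply H\"older with the $L^{p_i}$ bounds for the Hardy--Littlewood maximal function; the $T_j$ case is handled identically via the Schwartz decay of $\check{m_j}$.
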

\begin{proof}
Let us consider the kernel 
$K(y,z)=m^{\vee}(y,z)$ of $A_1^\lambda$ defined in \eqref{av},  which satisfies that
$|K(y,z)|\le C(1+|y|+|z|)^{-(n+\la+1/2)}$ (see, for example, \cite{Grafakos2014b}),
hence for $\la>n-1/2$, we have
\begin{align*}
 |A_t(f,g)(x)| 
=\,\,&\,\,\bigg|\int_{\mathbb R^{2n}}t^{-2n}K(\f{x-y}t,\f{x-z}t)f(y)g(z)dydz\bigg|\\
\le \,\,&\,\,C(\vp_t*|f|)(x)(\vp_t*|g|)(x)\\
\le \,\,&\,\,CM(f)(x)M(g)(x),
\end{align*}
where $M$ is the Hardy-Littlewood maximal function, and $\vp_t(y)=t^{-n}\vp(y/t)$
with $\vp(y)=(1+|y|)^{-(n+\la+1/2)/2}$,
which is integrable when $\la >n-1/2$.
Then $T_*(f,g)(x)\le CM(f)(x)M(g)(x)$,
which implies that $\|T_*(f,g)\|_{L^p}\le C(p_1,p_2)\|f\|_{L^{p_1}}\|g\|_{L^{p_2}}$ for
$1<p_1,p_2\le\nf$ with
$1/p=1/p_1+1/p_2$
in view of the boundedness of the Hardy-Littlewood maximal function.

We observe that each $m_j$ is smooth and compactly supported, hence for each  
$j$ a similar argument yields $\|T_j\|_{L^{p_1}\times L^{p_2}\to L^p}\le C_j(p_1,p_2)<\infty$.
 \end{proof}


 


With the aid of the preceding decomposition and the boundedness of
$T_j$, the study of the boundedness of
$T_*$ is reduced to   the decay of $C_j$ in $j$.
 

We now go back to the multipliers and will apply  Theorem~\ref{04141}.
For this purpose 
we should study kinds of norms of $m_j$.

\begin{lm}\label{om}
There exists a constant $C$ such that
$$
\|m_j\|_{L^2}\le C2^{-j(\la+1/2)},
$$
and for any multiindex $\al$,
\begin{equation}\label{ofkkri}
\|\p^{\al}m_j\|_{L^{\nf}}\le C_\al2^{-j(\la-|\al|)}.
\end{equation}
\end{lm}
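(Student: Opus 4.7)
The plan is to exploit the annular localization of $m_j$: on its support, $|(\xi,\eta)|$ lies in a shell of radial width $\approx 2^{-j}$ around the unit sphere in $\bbr^{2n}$, so in particular $1-|(\xi,\eta)|^2\approx 2^{-j}$. This single observation pins down both the pointwise size of $m$ on $\text{supp}(m_j)$ and the Lebesgue measure of that support, which is $\approx 2^{-j}$. Both bounds of the lemma follow from these facts plus Leibniz's rule; the $L^\nf$ bound on derivatives is the main ingredient, and the $L^2$ bound then comes for free.

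For the derivative bound, I will apply Leibniz to the product (correcting the evident typo so that the cutoff is $\psi_j(1-|(\xi,\eta)|)$):
$$
\p^\al m_j = \sum_{\be+\ga=\al}\binom{\al}{\be}\, \p^\be\!\left[(1-|(\xi,\eta)|^2)^\la\right] \cdot \p^\ga\!\left[\psi_j(1-|(\xi,\eta)|)\right].
$$
A short induction on $|\be|$ shows that $\p^\be[(1-|(\xi,\eta)|^2)^\la]$ is a finite linear combination of terms of the form $P(\xi,\eta)(1-|(\xi,\eta)|^2)^{\la-k}$ with $k\le |\be|$ and $P$ a polynomial; on $\text{supp}(\psi_j)$ the polynomial $P$ is bounded and $1-|(\xi,\eta)|^2\approx 2^{-j}$, so this factor is $\le C_\be\, 2^{-j(\la-|\be|)}$. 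For the second factor, $\psi_j(s)=\psi(2^j s)$, so the chain rule introduces a factor of $2^j$ per derivative, while derivatives of $1-|(\xi,\eta)|$ are bounded on the region $|(\xi,\eta)|\approx 1$ (which is safely away from the origin); hence $|\p^\ga\psi_j(1-|(\xi,\eta)|)|\le C_\ga\, 2^{j|\ga|}$. Multiplying and summing over $\be+\ga=\al$, the $j$-powers consolidate to $2^{-j(\la-|\al|)}$, proving \eqref{ofkkri}.

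For the $L^2$ bound, specialize the $L^\nf$ estimate to $\al=0$ to obtain $\|m_j\|_{L^\nf}\le C\, 2^{-j\la}$. Since $\text{supp}(m_j)$ is contained in a spherical shell of thickness $\ls 2^{-j}$ and radius $\approx 1$ in $\bbr^{2n}$, its Lebesgue measure is $\ls 2^{-j}$; hence
$$
\|m_j\|_{L^2}^{\,2}\le \|m_j\|_{L^\nf}^{\,2}\,|\text{supp}(m_j)|\ls 2^{-2j\la}\cdot 2^{-j},
$$
which gives $\|m_j\|_{L^2}\le C\, 2^{-j(\la+1/2)}$. I do not expect any serious obstacle here: the whole argument is bookkeeping of two competing scales, namely the annular width $2^{-j}$ and the factor $2^j$ paid by each derivative falling on the cutoff, balanced against the $2^{-j}$ gained by each derivative falling on $(1-|(\xi,\eta)|^2)^\la$.
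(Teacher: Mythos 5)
Your proof is correct and takes essentially the same route as the paper: the derivative estimate is the same Leibniz computation (with the size of each derivative of $(1-|(\xi,\eta)|^2)_+^\la$ read off from $1-|(\xi,\eta)|^2\approx 2^{-j}$ on the support, and a factor $2^{j}$ per derivative on the cutoff), and your $L^\infty$-bound-times-measure-of-support argument for the $L^2$ norm is just a repackaging of the paper's polar-coordinate integral over the shell of width $\approx 2^{-j}$.
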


\begin{proof}
A change of variables using polar coordinates implies that
\begin{align*}
\|m_j\|_{L^2}=&\Big(\int_{\bbr^{2n}}|m_j(\xi,\eta)|^2d\xi d\eta\Big)^{1/2}\\
\le& C\big(\int_{1-2^{-j}}^{1-2^{-j-2}}(1-r^2)^{2\la}r^{2n-1}dr\big)^{1/2}\\
\le& C(2^{-2j\la}2^{-j})^{1/2}\\
=&C2^{-j(\la+1/2)}
\end{align*}

To estimate the $\al$-th derivatives, we use the Leibniz's rule to write
$$
\p^{\al}m_j(\xi,\eta)=\sum_{\al_1+\al_2=\al} C_{\al_1}\p^{\al_1}m(\xi,\eta)\p^{\al_2}\psi_j(|(\xi,\eta)|) .
$$
Noticing that 
 $|\p^{\al_1}m(\xi,\eta)|\le C 2^{-j(\la-|\al_1|)}$  and
$\p^{\al_2}\psi_j(|(\xi,\eta)|)\le C2^{j|\al_2|}$,   we  derive the bound
$\p^{\al}m_j(\xi,\eta)$ by $C2^{-j(\la-|\al|)}$.
\end{proof}

The multiplier $m_j$ is not supported in the annulus of radius $2^j$ and one can verify that its Sobolev norm is not as good as would wish.
Actually the norm increases as the number of derivatives is large. So 
a dilation is necessary to apply  Theorem~\ref{04141}.

Let us define $M_j(\xi,\eta)=m_j(2^{-j}\xi,2^{-j}\eta)$, which
is supported in the annulus $\{(\xi,\eta)\in\mathbb R^{2n}: {2^{j}-1} \le |(\xi,\eta)| \le   {2^{j}-1/4} \}$, whose width is $3/4$. Based on Lemma \ref{om},
we have the following corollary.
\begin{cor}\label{mm}
The multipliers $M_j(\xi,\eta)=m_j(2^{-j}\xi,2^{-j}\eta)$ satisfy
$$
\|\p^{\al}M_j\|_{L^{\nf}}\le C2^{-j\la} \qq\text{for all multiindex }\al, 
$$
$$\nabla m_j(\xi,\eta)=2^j(\nabla M_j)(2^j\xi,2^j\eta),$$
and
$$
\|M_j\|_{L^r_s}\le C2^{-j\la}2^{j(2n-1)/r},
$$
where $\|M_j\|_{L^r_s}= \|(I-\Delta)^{s/2} M_j\|_{L^r}$ is the Sobolev norm of $M_j$.

\end{cor}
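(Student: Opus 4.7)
First, items one and two follow from direct chain-rule computations. Writing $M_j(\xi,\eta) = m_j(2^{-j}\xi, 2^{-j}\eta)$, I would differentiate to obtain
$$\partial^\alpha M_j(\xi,\eta) = 2^{-j|\alpha|}(\partial^\alpha m_j)(2^{-j}\xi, 2^{-j}\eta),$$
and then invoke the bound \eqref{ofkkri} from Lemma~\ref{om} to conclude
$$\|\partial^\alpha M_j\|_{L^\infty} = 2^{-j|\alpha|}\|\partial^\alpha m_j\|_{L^\infty} \le C_\alpha\, 2^{-j|\alpha|}\, 2^{-j(\lambda-|\alpha|)} = C_\alpha\, 2^{-j\lambda}.$$
The identity $\nabla m_j(\xi,\eta) = 2^j(\nabla M_j)(2^j\xi, 2^j\eta)$ is the chain rule applied to the inverse relation $m_j(\xi,\eta) = M_j(2^j\xi, 2^j\eta)$.

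The Sobolev bound is the core of the statement. The plan is to combine the pointwise derivative estimate just obtained with a volume bound on the support of $M_j$. Since $M_j$ is supported in the annulus $\{(\xi,\eta)\in\bbr^{2n} : 2^j - 1 \le |(\xi,\eta)| \le 2^j - 1/4\}$, which has inner radius comparable to $2^j$ and radial thickness at most $3/4$, its Lebesgue measure is bounded by a constant multiple of $2^{j(2n-1)}$. For integer $s$ and $r\in(1,\infty)$, the standard Riesz-transform characterization of Bessel potential spaces yields the equivalence
$$\|M_j\|_{L^r_s} \approx \sum_{|\alpha|\le s}\|\partial^\alpha M_j\|_{L^r}.$$
Each $\partial^\alpha M_j$ is supported in the same annulus as $M_j$, so H\"older's inequality gives
$$\|\partial^\alpha M_j\|_{L^r} \le \|\partial^\alpha M_j\|_{L^\infty}\,|\mathrm{supp}(M_j)|^{1/r} \le C\,2^{-j\lambda}\,2^{j(2n-1)/r},$$
and summing over $|\alpha|\le s$ produces the desired bound.

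The one mildly subtle point is that $(I-\Delta)^{s/2}$ is a \emph{nonlocal} operator, so pointwise information on $M_j$ alone does not immediately translate into an $L^r$ bound on $(I-\Delta)^{s/2}M_j$; this is why I would first reduce to integer $s$ via the derivative-norm equivalence above, where locality restores the role of the compact support. Should a non-integer $s$ be needed in the applications, the monotonicity $\|M_j\|_{L^r_s}\le C_s\|M_j\|_{L^r_{\lceil s\rceil}}$ reduces the general case to the integer case without altering the dependence on $j$. I do not anticipate any serious obstacle beyond this bookkeeping.
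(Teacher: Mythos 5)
Your argument is correct and is essentially the paper's own proof: chain rule plus the derivative bounds of Lemma~\ref{om} for the first two assertions, and for the Sobolev bound the sup-norm estimate combined with the fact that the supporting annulus has measure about $2^{j(2n-1)}$, so H\"older gives the factor $2^{j(2n-1)/r}$. The paper leaves the reduction of $\|\cdot\|_{L^r_s}$ to derivative norms implicit ("straightforward"), whereas you spell out the integer-$s$ reduction and the monotonicity in $s$; this is just a more careful write-up of the same computation.
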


\begin{proof}

We have 
$$
| \p^{\al}M_j |\le 2^{-j|\al|}| (\p^{\al}m_j)(2^{-j}\xi,2^{-j}\eta)|\le C
2^{-j|\al|-j(\la-|\al|)}= C2^{-j\la} , 
$$
using \eqref{ofkkri}. 
The verification of the last identity is straightforward
once we notice that $M_j$ is supported in the annulus
$$
\{(\xi,\eta): 2^j-4\le |(\xi,\eta)|\le 2^j-1\}
$$
whose volume is about $2^{j(2n-1)}$.
\end{proof}

 \begin{proof}[Proof of Theorem \ref{Main}]
 
 It is easy to verify that
 $T_j$ in \eqref{e02261} stays the same if we replace $m_j$
 by $M_j$.
 We apply Theorem~\ref{04141} to $M_j$ with $r=4$,
then
it follows from this and Proposition~\ref{Uni} that $T_*$
is bounded from $L^2\times L^2$ to $L^1$ when $\la>\tf{2n+3}4$.
 \end{proof}
 
Using complex interpolation between Theorem~\ref{Main} and 
Proposition~\ref{e02261} we can obtain a larger range of boundedness,
which we will not pursue here.

 As a  corollary of Theorem \ref{Main}, we obtain the pointwise
 convergence, as $t\to0$, of the operator $A^\la_{t}(f,g)(x)$, which we denote
 by $A_{t}(f,g)(x)$ as well.
 
\begin{prop}
Suppose $\la>\tf{2n+3}4$, 
then
for $f\in L^{2}$ and $g\in L^{ 2}$  we have
\begin{equation}\label{pc}
\lim_{t\to0}A_t(f,g)(x)\to f(x)g(x) \qq\text{ a.e..}
\end{equation}
\end{prop}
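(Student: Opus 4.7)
The plan is to run the classical density/maximal-function argument of Stein, adapted to the bilinear setting via Theorem~\ref{Main}. The starting observation is that on a dense subspace of $L^2\times L^2$ the pointwise convergence $A_t(f,g)(x)\to f(x)g(x)$ is easy, and then one promotes it to all of $L^2\times L^2$ using the $L^1$ bound for the maximal operator $T^\la_*$.

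First, I would verify that for Schwartz $f,g$ one has $A_t(f,g)(x)\to f(x)g(x)$ as $t\to 0$ for every $x\in\rn$. This is immediate from dominated convergence on the frequency side: $m^\la(t\xi,t\eta)\to 1$ pointwise as $t\to 0$, $|m^\la|\le 1$, and $\wh f\wh g\in L^1(\bbr^{2n})$. (One could alternatively invoke $\sup_{t>0}\|A_t(f,g)\|_{L^\nf}\le C\|f\|_{L^\nf}\|g\|_{L^\nf}$ and the fact that $A_t\to \mathrm{Id}$ in the Schwartz topology.)

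Next, for arbitrary $f,g\in L^2(\rn)$, I would fix $\vep>0$ and pick sequences of Schwartz functions $f_k\to f$, $g_k\to g$ in $L^2$. Using bilinearity,
\[
A_t(f,g)-f\cdot g = A_t(f-f_k,g) + A_t(f_k,g-g_k) + \bigl(A_t(f_k,g_k)-f_k g_k\bigr) + (f_k g_k - fg) .
\]
Taking $\limsup_{t\to 0}$ and using the Schwartz case to kill the third term, one gets the pointwise bound
\[
\Om(x):=\limsup_{t\to 0}|A_t(f,g)(x)-f(x)g(x)| \le T^\la_*(f-f_k,g)(x) + T^\la_*(f_k,g-g_k)(x) + |f_k g_k - fg|(x) .
\]
Now I would apply Chebyshev together with Theorem~\ref{Main} to estimate, for every $\vep>0$,
\[
\bigl|\{x:\Om(x)>3\vep\}\bigr| \le \tfrac{C}{\vep}\|f-f_k\|_{L^2}\|g\|_{L^2} + \tfrac{C}{\vep}\|f_k\|_{L^2}\|g-g_k\|_{L^2} + \tfrac{1}{\vep}\|f_k g_k - fg\|_{L^1} .
\]
The last term is handled by H\"older: $\|f_k g_k - fg\|_{L^1}\le \|f_k-f\|_{L^2}\|g_k\|_{L^2}+\|f\|_{L^2}\|g_k-g\|_{L^2}\to 0$. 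Letting $k\to\nf$ we obtain $|\{\Om>3\vep\}|=0$, and since $\vep>0$ was arbitrary, $\Om(x)=0$ a.e., which is \eqref{pc}.

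The only step requiring work is really the a.e.\ density argument above; the hypothesis $\la>\tf{2n+3}4$ enters only through Theorem~\ref{Main}, which supplies the $L^2\times L^2\to L^1$ bound for $T^\la_*$ needed in the Chebyshev step. I do not anticipate a serious obstacle beyond the bookkeeping of the triangle inequality.
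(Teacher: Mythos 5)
Your proposal is correct and follows essentially the same density-plus-maximal-function argument as the paper, with the hypothesis $\la>\tf{2n+3}4$ entering only through Theorem~\ref{Main} in both cases. The only cosmetic difference is bookkeeping: you compare $A_t(f,g)$ directly with $fg$ and absorb the extra term $f_kg_k-fg$ via H\"older and Chebyshev, whereas the paper bounds the Cauchy-type oscillation $\limsup_{\tht\to0}\limsup_{\ep\to0}|A_\tht-A_\ep|$ of the bad pieces by $2T^\la_*$; both are standard and equally valid.
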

The proof of this proposition is similar to the linear case, but we sketch it here for completeness.
\begin{proof}
It is easy to establish \eqref{pc} when both $f$ and $g$ are Schwartz functions.
To prove \eqref{pc} for $f\in L^{2}$ and $g\in L^{2}$ it suffices to 
show that for any given $\de>0$ the set
$E_{f,g}(\de)=\{y\in\mathbb R^n:O_{f,g}(y)>\de\}$ has measure $0$,
where
$$
O_{f,g}(y)=\limsup_{\tht\to0}\limsup_{\ep\to0}\big|A_{\tht}(f,g)(y)-A_\ep(f,g)(y)\big|.
$$
For any positive number $\eta$ smaller than $\|f\|_{L^{2}},\, \|g\|_{L^{2}}$, there exist  Schwartz functions $f_1=f-a$ and $g_1=g-b$
such that both $\|a\|_{L^{2}}$, and $\|b\|_{L^{2}}$ are bounded by $\eta$.
We observe that
$$|E_{f,g}(\de)|\le |E_{f_1,g_1}(\de/4)|+|E_{a,g_1}(\de/4)|+|E_{f_1,b}(\de/4)|+|E_{a,b}(\de/4)|.$$
Notice that
$|E_{f_1,g_1}(\de/4)|=0$ since \eqref{pc} is valid for $f_1,g_1$. 
To control the remaining three terms, we observe that, for instance, 
\begin{align*}
|E_{a,g_1}(\de/4)|\le\,\, &\,\,  |\{y:\ 2T_*(a,g_1)(y)>\de/4\}|\\
\le\,\, &\,\, C\f{\|a\|_{L^{2}}\|g_1\|_{L^{2}}}\de \\
\le \,\, &\,\, C  \f{\eta\|g\|_{L^{2}}}\de ,
\end{align*}
where the last term goes to $0$ as $\eta\to0$ since $g$ and $\de$ are fixed.
\end{proof}

\section{Appendix: Proof of Proposition~\ref{BL}}

The proof of this proposition is essentially contained in \cite[Lemma 6]{Grafakos2016a}, 
but for the sake of completeness we include it, ignoring some routine calculations that can be found in \cite{Grafakos2016a}. 

\begin{proof}[Proof of Proposition~\ref{BL}]
Notice that in the support of $\nabla M_{j,\ga}^1(\xi,\eta)$, we have $\xi
\in E$ and $\eta\in E$, hence we may alway assume that $\wh f=\wh f\chi E$ and
$\wh g=\wh g\chi_E$. In other words, it suffices to establish 
\eqref{e06061} without $\chi_ E$.

It suffices to consider, for example, the typical term $\xi_1\p_{\xi_1}M_{j,\ga}^1(\xi,\eta)$, which is
$\sum_k\sum_{l}a_{k,l} 
\p_{\xi_1}\om_{1,k}(\xi)\xi_1\om_{2,l}(\eta)$ for allowed $k,\ l$ in $M^1_{j,\ga}$.
We rewrite
this as 
\begin{equation}\label{e08311}
2^{j}2^\ga\sum_k\sum_{l}b_{k,l} 
\tilde\om_{1,k}(\xi)\tilde\om_{2,l}(\eta),
\end{equation} 
where 
$\tilde\om_{1,k}(\xi)=2^{-j}\p_{\xi_1}\om_{1,k}(\xi)\xi_1
/\|\p_{\xi_1}\om_{1,k}(\xi)\|_{L^r}$,
$\tilde\om_{2,l}=\om_{2,l}/\|\om_{2,l}\|_{L^r}$,
and $b_{k,l}=2^{-\ga}a_{k,l}\|\p_{\xi_1}\om_{1,k}(\xi)\|_{L^r}\|\om_{2,l}\|_{L^r}$.

We need some estimates of $\tilde\om_{1,k}$ which will be useful later. The function 
$\p_{\xi_1}\om_{1,k}(\xi)$ is of the form
$2^\ga 2^{\ga n/2}\vp(2^\ga\xi)$ for a compactly supported smooth function $\vp$,
hence $\|\p_{\xi_1}\om_{1,k}(\xi)\|_{L^r}\approx 2^{\ga(1+\tf n2-\tf nr)}$.
This implies that $\|\tilde\om_{1,k}\|_{L^\nf}\le C2^{\ga n/r}$ since $|\xi_1|
\le C2^j$.

We have 
$$
\bigg\|(\sum_{k,l}2^{\ga s}|2^{\ga n}a_{k,l} 
\chi_{Q_{\ga, k,l}}|^2)^{1/2}\bigg\|_{L^r} \leq C\|M_j\|_{L^r_s},
$$
by Lemma~\ref{08311}, 
where $Q_{\ga, k,l}$ is the cube  centered at $2^{-\ga}(k,l)$ with length
$2^{1-\ga}$. 
This leads to
$$
\bigg\|(\sum_{k,l}2^{\ga s}|2^{-j}2^{-\ga}a_{k,l}\p_{\xi_1}\om_{1,k}(\xi)\xi_1\om_{2,l}(\eta)|^2)^{1/2}\bigg\|_{L^r} \leq C\|M_j\|_{L^r_s}.
$$
Recall that $\|M_j\|_{L^r_s}\le C2^{-j\la}
$. Then using the disjointness of supports of $\om_{k,l}$ we obtain further that
\begin{align*}
B=(\sum |b_{k,l}|^r)^{1/r} 
\le C2^{-j\la}2^{-s\ga}.
\end{align*}


Each  $\omega$ in level $\ga$ is of the form $\omega=\omega_k\omega_l$ 
with $\vu=(k,l)$, where $k$ and $l$ both range over  index sets 
of cardinality at most $C2^{jn}2^{\ga n}.$ Moreover we denote by $b_{kl}$ the coefficient $b_\om$, and
we define a bilinear multiplier
$$ \varsigma_\ga=\sum_{k\in U_1}\tilde \omega_k\sum_{l\in U_2} b_{k l} \tilde \omega_l.$$

Let $A$ be a number between $ \|b\|_\nf$ and $B=\|b\|_r$.
Related to $\tau\ge 0$ we define $U_\tau=\{(k,l): 2^{-\tau-1}A\le |b_{k,l}|\le 2^{-\tau}A\}$.
Denote by $col_k=\{(k,l)\in U_\tau:\, k\text{ fixed}\}$.
Define
$$
U_\tau^1=\{(k,l)\in U_\tau:\, \#col_k\ge N_1\},
$$
where $N_1$ is a to be determined number.
So $U_\tau^1$ is a union of long columns.
We denote by $P_1U_\tau^1=\{k: \exists\ l\ s.t.\ (k,l)\in U_\tau^1 \}$, the projection of $U_\tau^1$
onto the $k$-axis. Then the number of columns is $\#P_1U_\tau^1\le B^r(2^{-\tau}A)^{-r}N_1^{-1}:=N_2$.

Let $U_\tau^2$ be the complement of $U_\tau^1$ in $U_\tau$. Associated to $U_\tau^i$ we can define a
 bilinear multiplier $\varsigma_\tau^i=2^{j}2^\ga\sum_{(k,l)\in U_\tau^i}b_{k,l}\tilde \om_{k,l}$, and
a bilinear operator $T_{\varsigma_\tau^i}$.
A well-known argument (see, for instance, \cite{Grafakos2015} or \cite{Grafakos2016a}) shows that
$$
\|T_{\varsigma_\tau^1}(f,g)\|_{L^1}\le C 2^{j}2^\ga N_2^{1/2} 2^{2\ga n/r}2^{-\tau}A \|f\|_{L^2}\|g\|_{L^2}
$$
and
$$
\|T_{\varsigma_\tau^2}(f,g)\|_{L^1}\le C 2^{j}2^\ga N_1^{1/2} 2^{2\ga n/r}2^{-\tau}A \|f\|_{L^2}\|g\|_{L^2}.
$$
Identifying $N_1$ and $N_2$, and taking $A=B$ in our situation, 
we obtain that $N_1=N_2=C2^{\tau r/2}$, which implies that 
the 
$\|T_{\varsigma_\tau^i}\|_{L^2\times L^2\to L^1}$  is bounded by 
$C2^{-j(\la-1)}2^{-\ga(s-\tf {2n}r-1)}2^{-\tau(1-\tf r4)}$.

Summing over $\tau$, we obtain the claimed bound for $r<4$.


For the case $r=4$, we may assume that  $\tau\le\tau_m=2(j+\ga)n/4$ since $N_2=2^{\tau r/2}\le 2^{(j+\ga)n}$ with $r=4$. Actually we define 
$$U_{\tau_m}=\{(k,l):\, |b_{k,l}|\le 2^{-\tau_m}A\}.$$
Then the previous argument gives the bound 
$C (j+\ga)n 2^{-j(\la-1)}2^{-\ga(s-\tf n2-1)}$ when $r=4$.
\end{proof}

A lemma concerning the decay of the coefficients related to the orthonormal basis in 
Lemma \ref{wave} is given below.
\begin{lm}[\cite{Grafakos2015}]\label{smooth}
 Suppose $\varsigma(\xi,\eta)$ defined on $\mathbb R^{2n}$ satisfies that there exists a constant $C_M$ such that 
$\|\p^\al(\varsigma(\xi,\eta))\|_{L^{\nf}}\le C_{M}$ for each multiindex  $|\al|\le M$,
where $M$ is the number of vanishing moments of $\psi_M$. 
Then for 
any 
nonnegative
integer $\ga \in \mathbb N_0=\{n\in\mathbb Z: n\ge 0\}$ we have 
\begin{equation}\label{887}
|\langle \Psi^{\ga,G}_{\vec \mu}, \varsigma\rangle| \leq C C_M2^{-(M+n)\ga} \, .
\end{equation}
\end{lm}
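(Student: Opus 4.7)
The plan is to verify the decay estimate by a Taylor expansion argument that exploits the vanishing moments of $\psi_M$ in whichever coordinate direction the wavelet carries a ``mother'' factor. Since the hypothesis excludes $G=(F,\ldots,F)$, there is at least one index $i_0\in\{1,\dots,2n\}$ with $G_{i_0}=M$; by relabeling I may assume $G_1=M$, so that $\Psi^G(\vec y)=\psi_M(y_1)\prod_{i\ge 2}\psi_{G_i}(y_i)$ and hence $\int_{\mathbb R}y_1^\al\psi_M(y_1)\,dy_1=0$ for $0\le \al\le M-1$.

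First I would rewrite the inner product via the substitution $\vec y=2^\ga\vec x-\vec\mu$:
\begin{equation*}
\langle \Psi^{\ga,G}_{\vec\mu},\varsigma\rangle
=2^{\ga n}\!\int_{\mathbb R^{2n}}\!\Psi^G(2^\ga\vec x-\vec\mu)\,\varsigma(\vec x)\,d\vec x
=2^{-n\ga}\!\int_{\mathbb R^{2n}}\!\Psi^G(\vec y)\,\varsigma\bigl(2^{-\ga}(\vec y+\vec\mu)\bigr)\,d\vec y,
\end{equation*}
which extracts one factor of $2^{-n\ga}$ up front. Next, fixing $y'=(y_2,\dots,y_{2n})$ and setting $h(t)=\varsigma\bigl(2^{-\ga}\vec\mu+2^{-\ga}(t,y')\bigr)$, I would apply a one-dimensional Taylor expansion in $y_1$ of order $M-1$ with integral remainder, writing $h(y_1)=P(y_1)+R(y_1,y')$ where $P$ has degree at most $M-1$ in $y_1$ (with coefficients depending on $y'$) and the remainder is controlled by
\begin{equation*}
|R(y_1,y')|\le \tfrac{|y_1|^M}{(M-1)!}\sup_{|s|\le |y_1|}|h^{(M)}(s)|\le C\,C_M\,2^{-M\ga}|y_1|^M,
\end{equation*}
using the chain rule together with the hypothesis $\|\p^\al\varsigma\|_{L^\nf}\le C_M$ for $|\al|\le M$.

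Substituting this decomposition, the polynomial contribution
$\int \psi_M(y_1)P(y_1)\,dy_1$ vanishes for every fixed $y'$ by the vanishing-moments property of $\psi_M$; the remaining $y'$-integral against the compactly supported factors $\psi_{G_i}(y_i)$ is harmless. The remainder contribution is estimated by absolute values, yielding
\begin{equation*}
\Bigl|\int \Psi^G(\vec y)\,\varsigma(2^{-\ga}(\vec y+\vec\mu))\,d\vec y\Bigr|
\le C\,C_M\,2^{-M\ga}\int|\Psi^G(\vec y)||y_1|^M\,d\vec y
\le C'\,C_M\,2^{-M\ga},
\end{equation*}
where finiteness of the final integral follows from compact support of $\Psi^G$. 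Combining with the prefactor $2^{-n\ga}$ gives exactly $|\langle\Psi^{\ga,G}_{\vec\mu},\varsigma\rangle|\le C\,C_M\,2^{-(M+n)\ga}$.

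The argument is essentially bookkeeping rather than hard analysis; the only point that requires some care is lining up the conventions so that the number of vanishing moments of $\psi_M$ truly matches the order of Taylor expansion, so that all polynomial pieces are annihilated and the remainder genuinely involves only derivatives of order $\le M$ (so the hypothesis of the lemma suffices). Once this alignment is fixed, the estimate is immediate.
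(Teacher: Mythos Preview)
Your argument is correct and is exactly the standard Taylor-expansion-plus-vanishing-moments computation that the paper has in mind: the paper does not write out a proof but simply cites Appendix~B.2 of \cite{Grafakos2014a} and \cite{Grafakos2015}, which carry out precisely this estimate. One cosmetic remark: the exclusion of $G=(F,\dots,F)$ is not part of the lemma's hypothesis but rather of the structure of the wavelet basis for $\ga\ge 1$; for $\ga=0$ the bound $2^{-(M+n)\cdot 0}=1$ is immediate from compact support and $\|\varsigma\|_{L^\infty}\le C_M$, so no moment cancellation is needed there.
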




This lemma can be proved by applying Appendix B.2 in \cite{Grafakos2014a}, and we delete the  
details 
which can be found in \cite{Grafakos2015}.

\medskip

 
By this lemma
we have a better decay in $j$ for $b_{k,l}$ compared with Corollary~\ref{06061}, 
namely $|b_{k,l}|\le C2^{-ja}2^{-\ga (s+n)}$, using 
$|\p^\be m|\le C|(\xi,\eta)|^{-a}$ in Theorem~\ref{04071} if we assume 
$s$ number of derivatives.
It is natural to conjecture that this better decay in $j$ can  lower the restriction on $a$.
This, unfortunately, is not true.

As we did before,
setting $N_1=N_2$ implies that
$N_1=2^{\tau r/2}$.
An important observation is that $|b_{k,l}|\ll B$. Actually the smallest
$\tau$ such that $2^{-\tau}B\sim  \|b_{k,l}\|_{\ell^\nf}\le C2^{-ja}2^{-\ga (s+n)}$ is $\tau_0=\tf{2nj}r+n\ga$, which means that the summation 
in $\tau$ starts from $\tau_0$ other than $0$.

Another observation is that $N_2$ related to $\tau_0$ is 
$2^{\tau_0r/2}\sim 2^{nj+ {n\ga r}/2}$, which is smaller than 
$2^{\tau_mr/2}\sim 2^{nj+n\ga}$ when $r>2$. So for $r\in(2,4)$, we 
take $N_2=2^{nj+n\ga}.$
And the summation in $\tau$ consists just one term $\tau_0$.

By the calculation in the proof of Proposition~
\ref{BL} the norm of $T_{\varsigma_\ga}\sum_\tau T_{\varsigma_\tau^i}$, which consists of one term with $\tau=\tau_0$,
is bounded by a constant multiple of $2^{-j(a-  n/2-1)}2^{-\ga (s-  n/2-1)}$.
This provides no  new information except for a bound independent of $r$,
which is natural since there is no $r$  in the conditions of Theorem~\ref{04071}.

So we still have the restriction $a>\tf n2+1$.
It is also easy to verify that when $r=4$ the bound for $T_{\varsigma_\tau^i}$ does not change, so in this case we need  $a>\tf n2+1$ as well.



\begin{rmk}
We use mainly the case $r=4$ in applying Proposition~\ref{BL}, while a smaller $A$, which reduces the number of
$\tau$'s involved, does not change the exponential decay in $j$ at all.

\end{rmk}

\begin{rmk}

Lemma~\ref{smooth} implies also a better decay of the off-diagonal
part in $j$, namely $2^{-j(a-1/2)}$, which, however, is useless
for us due to the restriction of the diagonal part. 
\end{rmk}

\end{document}